\newtheorem{proposition}{Proposition}
\newtheorem{lemma}{Lemma}
\newtheorem{theorem}{Theorem}
\newtheorem{corollary}{Corollary}
\theoremstyle{definition}
\newtheorem{example}{Example}
\theoremstyle{remark}
\newtheorem {remark}{Remark}
\DeclareMathOperator{\dv}{div}
\DeclareMathOperator{\wdiv}{WDiv}
\DeclareMathOperator{\rk}{rk}
\DeclareMathOperator{\dg}{deg}
\DeclareMathOperator{\spec}{Spec}
\DeclareMathOperator{\aut}{Aut}
\DeclareMathOperator{\cl}{Cl}
\DeclareMathOperator{\supp}{Supp}
\def\Ker{{\rm Ker}}
\def\GG{{\mathbb G}}
\def\FF{{\mathbb F}}
\def\KK{{\mathbb K}}
\def\TT{{\mathbb T}}
\def\ZZ{{\mathbb Z}}
\def\NN{{\mathbb N}}
\def\QQ{{\mathbb Q}}
\def\PP{{\mathbb P}}
\def\AA{{\mathbb A}}
\def\OOO{\mathcal{O}}
\begin{document}
\date{}
\title[Equivariant embeddings of commutative algebraic groups]{Equivariant embeddings of commutative linear algebraic groups of corank one}
\author{Ivan Arzhantsev}
\address{National Research University Higher School of Economics, Faculty of Computer Science, Kochnovskiy Proezd 3, Moscow, 125319 Russia}
\email{arjantse@mccme.ru}
\author{Polina Kotenkova}
\address{Yandex, ulica L'va Tolstogo 16, Moscow, 119034 Russia}
\email{kotpy@mail.ru}

\subjclass[2010]{Primary 14M17, 14M25, 14M27; \ Secondary 13N15, 14J50}

\keywords{Toric variety, Cox ring, locally nilpotent derivation, Demazure root}

\maketitle

\begin{abstract}
Let $\KK$ be an algebraically closed field of characteristic zero, $\GG_m=(\KK\setminus\{0\},\times)$ be its multiplicative group, and $\GG_a=(\KK,+)$ be its
additive group. Consider a commutative linear algebraic group $\GG=(\GG_m)^r\times\GG_a$.
We study equivariant $\GG$-embeddings, i.e. normal $\GG$-varieties $X$ containing $\GG$ as an open orbit. We prove that $X$ is a toric variety and all such actions of $\GG$ on $X$ correspond to Demazure roots of the fan of $X$. In these terms, the orbit structure of a $\GG$-variety $X$ is described.
\end{abstract}

\section{Introduction}
Let $\KK$ be an algebraically closed field of characteristic zero, $\GG_m=(\KK\setminus\{0\},\times)$ be its multiplicative group, and $\GG_a=(\KK,+)$ be its additive group. It is well known that any connected commutative linear algebraic group $\GG$ over $\KK$ is isomorphic to $(\GG_m)^{r}\times(\GG_a)^s$ with some non-negative integers $r$ and $s$, see \cite[Theorem~15.5]{Hum}. We say that $r$ is the {\itshape rank} of the group $\GG$ and $s$ is the {\itshape corank} of $\GG$.

The aim of this paper is to study equivariant embeddings of commutative linear algebraic groups. Let us recall that an {\itshape equivariant embedding} of an algebraic group $G$ is a pair $(X,x)$, where $X$ is an algebraic variety equipped with a regular action $G\times X\to X$ and $x\in X$ is a point with the trivial stabilizer such that the orbit $Gx$ is open and dense in~$X$. We assume that the variety $X$ is normal. If $X$ is supposed to be complete, we speak about {\itshape equivariant compactifications} of $G$. For the study of compactifications of reductive groups,
see e.g.~\cite{Ti}. More generally, equivariant embeddings of homogeneous spaces of reductive groups is a popular object starting from early 1970th. Recent survey of results in this field may be found in~\cite{Tibook}.

Let us return to the case $\GG=(\GG_m)^{r}\times(\GG_a)^s$. If $s=0$ then $\GG$ is a torus and we come to the famous theory of toric varieties, see \cite{De}, \cite{Oda}, \cite{Fu}, \cite{CLS}. Another extreme $r=0$ corresponds to embeddings of a commutative unipotent (=vector) group. This case is also studied actively during last decades, see \cite{HT}, \cite{AS}, \cite{Arz2}, \cite{Fe}, \cite{DL}. The next natural step is to study the mixed case $r>0$ and $s>0$ and to combine advantages of both torus and additive group actions.

The present paper deals with the case $s=1$, i.e. from now on $\GG$ is a connected commutative linear algebraic group of corank one. In other words, $\GG=(\GG_m)^{n-1}\times\GG_a$, where
$n=\dim X$.

Let $X$ be a toric variety with the acting torus $\TT$. Consider an action $\GG_a\times X\to X$
normalized by $\TT$. Then $\TT$ acts on $\GG_a$ by conjugation with some character $e$. Such a character is called a {\itshape Demazure root} of $X$. If $T=\Ker(e)$, then the group $\GG:=T\times\GG_a$ acts on $X$ with an open orbit, and $X$ is a $\GG$-embedding, see Proposition~\ref{constr}. Our main result (Theorem~\ref{tmain}) states that all $\GG$-embeddings can be realized this way. To this end we prove that for any $\GG$-embedding $X$ the $(\GG_m)^{n-1}$-action on $X$ can be extended to an action of a bigger torus $\TT$ which normalizes the $\GG_a$-action and $X$ is toric with respect to $\TT$.

This result can not be generalized to groups of corank two; examples of non-toric surfaces which are equivariant compactifications of $\GG_a^2$ can be found in~\cite{DL}.
Similar examples are constructed in~\cite{DL}, \cite{DL2} for semidirect products
$\GG_m\rightthreetimes\GG_a$. Such groups can be considered as non-commutative groups of corank one.

If two toric varieties are isomorphic as abstract varieties, then they are isomorphic as toric varieties \cite[Theorem~4.1]{Be}. This shows that the structure of a torus embedding on a toric variety is unique up to isomorphism. A structure of a $\GG$-embedding on a given variety may be non-unique, see Examples~\ref{ex2}, \ref{ex4}. Such structures are given by Demazure roots and thus the number of structures is finite if $X$ is complete, and it is at most countable for arbitrary $X$. At the same time, $\GG_a^6$-embeddings into $\PP^6$ admit a non-trivial moduli space~\cite[Example~3.6]{HT}.

The paper is organized as follows. Section~\ref{sec2} contains preliminaries on torus actions on affine varieties. We recall basic facts on affine toric varieties and introduce a description of affine $T$-varieties in terms of proper polyhedral divisors due to Altmann and Hausen~\cite{AH}.
A correspondence between $\GG_a$-actions on $X$ normalized by $T$ and homogeneous locally nilpotent derivations (LNDs) of the algebra $\KK[X]$ is explained. We define Demazure roots of a cone and use them to describe homogeneous LNDs on $\KK[X]$, where $X$ is toric. Also we give a description of homogeneous LNDs of horizontal type on algebras with grading of complexity one obtained by Liendo~\cite{L1}.

In Section~\ref{sec3} we show that if $X$ is a normal affine $T$-variety of complexity one and the algebra $\KK[X]$ admits a homogeneous LND of degree zero, then $X$ is toric with an acting torus $\TT$, $T$ is a subtorus of $\TT$, and $\TT$ normalizes the corresponding $\GG_a$-action. This gives the result for affine $\GG$-embeddings. Moreover, Proposition~\ref{propaff} provides an explicit description of affine $\GG$-embeddings.

Section~\ref{sec4} deals with compactifications of $\GG$. Here we use the Cox construction and a lifting of the action of $\GG$ to the total coordinate space $\overline{X}$ of $X$ to deduce the result from the affine case.

In Section~\ref{sec5} we recall basic facts on toric varieties and introduce the notion of a Demazure root of a fan following Demazure~\cite{De}. The action of the corresponding one-parameter subgroup on the toric variety is also described there.

Let $\Sigma$ be a fan and $e$ be a Demazure root of $\Sigma$. In Section~\ref{sec6} we define a $\GG$-embedding associated to the pair $(\Sigma, e)$ and study the $\GG$-orbit structure of $X$.
It turns out that the number of $\GG$-orbits on $X$ is finite.

Finally, in Section~\ref{sec7} we prove that any $\GG$-embedding is associated with some pair $(\Sigma, e)$. The idea is to reduce the general case to the complete one via equivariant compactification. At the end several explicit examples of $\GG$-embeddings are given.

Some results of this paper appeared in preprint~\cite{AK}. They form a part of the Ph.D. thesis of the second author~\cite{Ko}.

\section{$\GG_a$-actions on affine $T$-varieties} \label{sec2}
Let $X$ be an irreducible affine variety with an effective action of an
algebraic torus $T$, $M$ be the character lattice of $T$,
$N$ be the lattice of one-parameter subgroups of $T$, and
$A=\KK[X]$ be the algebra of regular functions on $X$. It
is well known that there is a bijective correspondence between
effective $T$-actions on $X$ and effective $M$-gradings on~$A$. In
fact, the algebra $A$ is graded by a semigroup of lattice points
in some convex polyhedral cone $\omega\subseteq
M_{\QQ}=M\otimes_{\ZZ}\QQ$. So we have
$$
A=\bigoplus_{m\in \omega_{ M}} A_m\chi^m,
$$
where $\omega_{M}=\omega\cap M$ and $\chi^m$ is the character corresponding to $m$.

A derivation $\partial$ on an algebra $A$ is said to be {\itshape locally nilpotent} (LND) if for each $f\in A$ there exists $n\in\NN$ such that $\partial^n(f)=0$. For any LND $\partial$ on $A$ the map ${\varphi_{\partial}:\GG_a\times A\rightarrow A}$, ${\varphi_{\partial}(s,f)=\exp(s\partial)(f)}$, defines a
structure of a rational $\GG_a$-algebra on $A$. In fact, any regular $\GG_a$-action
on $X=\spec A$ arises this way.  A derivation $\partial$ on $A$
is said to be {\itshape homogeneous} if it respects the
$M$-grading. If ${f,h\in A\backslash \ker\partial}$ are homogeneous, then
${\partial(fh)=f\partial(h)+\partial(f)h}$ is homogeneous too and
${\dg\partial(f)-\dg f=\dg\partial(h)-\dg h}$. So any homogeneous
derivation $\partial$ has a well defined {\itshape degree} given
as $\dg\partial=\dg\partial(f)-\dg f$ for any homogeneous $f\in A\backslash \ker
\partial$. It is easy to see that an LND on $A$ is homogeneous if and only if the corresponding $\GG_a$-action is normalized by the torus~$T$ in the automorphism group~$\aut (X)$.

Any derivation on $\KK[X]$ extends  to a derivation on the field of fractions $\KK(X)$ by the Leibniz rule. A homogeneous LND $\partial$ on $\KK[X]$ is said to be of
{\itshape fiber type} if $\partial(\KK(X)^{T})=0$ and of
{\itshape horizontal type} otherwise. In other words, $\partial$
is of fiber type if and only if the general orbits of
corresponding $\GG_a$-action on $X$ are contained in the
closures of $T$-orbits.

Let $X$ be an affine toric variety, i.~e. a normal affine variety
with a generically transitive action of a torus $T$. In this case
$$
A=\bigoplus_{m\in \omega_{M}}\KK\chi^m=\KK[\omega_{M}]
$$
is the semigroup algebra.  Recall that for given cone $\omega\subset M_{\QQ}$, its {\itshape dual cone} is defined by
$$
\sigma=\{n\in N_{\mathbb{Q}}\,|\,\langle n,p\rangle\geqslant0\,\,\,\forall p\in\omega\},
$$
where $\langle,\rangle$ is the pairing between dual lattices $N$ and $M$. Let $\sigma(1)$ be the set of rays of a cone $\sigma$ and $n_{\rho}$ be the primitive lattice vector on the ray $\rho$. For $\rho\in\sigma(1)$ we set
$$
S_{\rho}:=\{e\in M\,|\, \langle n_{\rho},e\rangle=-1
\,\,\mbox{and}\,\, \langle n_{\rho'},e\rangle\geqslant0
\,\,\,\,\forall\,\rho'\in \sigma(1), \,\rho'\ne\rho\}.
$$
One easily checks that the set $S_{\rho}$ is infinite for each $\rho\in\sigma(1)$. The elements of the set $\mathfrak{R}:=\bigsqcup\limits_{\rho} S_{\rho}$ are called the {\itshape Demazure roots} of $\sigma$. Let $e\in S_{\rho}$. Then $\rho$ is called the {\itshape distinguished ray} of the root $e$. One can define the homogeneous LND on the algebra $A$ by the rule
$$
\partial_e(\chi^m)=\langle n_{\rho},m\rangle\chi^{m+e}.
$$
In fact, every homogeneous LND on~$A$ has a form $\alpha\partial_e$ for some $\alpha\in
\KK,\, e\in \mathfrak{R}$, see \cite[Theorem~2.7]{L1}. In other words, $\GG_a$-actions on $X$
normalized by the acting torus are in bijection with Demazure roots of the cone $\sigma$.

Clearly, all homogeneous LNDs on a toric variety are of fiber type.

\begin{example} \label{ex1}
Consider $X=\AA^k$ with the standard action of the torus $(\KK^{\times})^k$. It is a toric variety with the cone $\sigma=\QQ^k_{\geqslant0}$ having rays
$\rho_1=\langle(1,0,\ldots,0)\rangle_{\QQ_{\geqslant0}},\ldots,\rho_k=\langle(0,0,\ldots,0,1)\rangle_{\mathbb{Q}_{\geqslant0}}$.
The dual cone $\omega$ is $\QQ^k_{\geqslant0}$ as well. In this case
$$
S_{\rho_i}=\{(c_1,\ldots,c_{i-1},-1,c_{i+1},\ldots,c_k)\,|\,c_j\in\ZZ_{\geqslant0}\}.
$$
\vspace{0.05cm}
\begin{center}
\begin{picture}(100,75)
\multiput(50,15)(15,0){5}{\circle*{3}}
\multiput(35,30)(0,15){4}{\circle*{3}}
\put(20,30){\vector(1,0){100}} \put(50,5){\vector(0,1){80}}
\put(17,70){$S_{\rho_1}$} \put(115,7){$S_{\rho_2}$}
\put(100,70){$M_{\mathbb{Q}}=\mathbb{Q}^2$} \linethickness{0.5mm}
\put(50,30){\line(1,0){65}} \put(50,30){\line(0,1){50}}
\end{picture}
\end{center}
Denote
$x_1=\chi^{(1,0,\ldots,0)},\ldots,x_k=\chi^{(0,\ldots,0,1)}$. Then
$\KK[X]=\KK[x_1,\ldots,x_k]$. It is easy to see that the homogeneous LND corresponding to the root
$e=(c_1,\ldots,c_k)\in S_{\rho_i}$ is
$$
\partial_e=x_1^{c_1}\ldots x_{i-1}^{c_{i-1}} x_{i+1}^{c_{i+1}}\ldots x_{k}^{c_{k}}\frac{\partial}{\partial x_i}.
$$

This LND gives rise to the $\GG_a$-action
$$
x_i\mapsto x_i+sx_1^{c_1}\ldots x_{i-1}^{c_{i-1}} x_{i+1}^{c_{i+1}}\ldots x_k^{c_k}, \quad
x_j\mapsto x_j, \quad j\ne i, \quad s\in\GG_a.
$$
\end{example}

Let us recall that the {\itshape complexity} of an action of a torus $T$ on an irreducible variety $X$ is the codimension of a general $T$-orbit on $X$, or, equivalently, the transcendence degree of the field of rational invariants $\KK(X)^T$ over $\KK$. In particular, actions of complexity zero are precisely actions with an open $T$-orbit.

Now we recall a description of normal affine $T$-varieties of complexity one in terms of proper polyhedral divisors. Let $N$ and $M$ be two mutually dual lattices with the pairing denoted by $\langle,\rangle$, $\sigma$ be a strongly convex cone in $N_{\mathbb{Q}}$, and $\omega\subseteq M_{\mathbb{Q}}$ be the dual cone. A~polyhedron $\Delta\subseteq N_{\mathbb{Q}}$, which can be decomposed as Minkowski sum of a bounded polyhedron and the cone $\sigma$, is called {\itshape$\sigma$-tailed}. Let $C$ be a smooth curve.
A {\itshape$\sigma$-polyhedral} divisor on $C$ is a formal sum
$$
\mathfrak{D}=\sum\limits_{z\in C} \Delta_{z}\cdot z,
$$
where $\Delta_{z}$ are the $\sigma$-tailed polyhedra and only finite number of them are not
equal to $\sigma$. The divisor $\mathfrak{D}$ is {\itshape trivial}, if $\Delta_{z}=\sigma$
for all $z\in C$.

The finite set ${\supp \mathfrak{D}:=\{z\in C\mid \Delta_{z}\ne\sigma\}}$ is called the {\itshape support} of $\mathfrak{D}$. For every ${m\in\omega_{M}}$ we can obtain the
$\mathbb{Q}$-divisor $\mathfrak{D}(m)=\sum\limits_{z\in C} h_z(m)\cdot z,$ where $h_z(m):=\min\limits_{p\in\Delta_z} \langle p,m\rangle$. So a $\sigma$-polyhedral divisor is just a piecewise-linear function from $\omega_M$ to the group of $\QQ$-divisors on~$C$. One can define the $M$-graded algebra
$$
{A[C,\mathfrak{D}]=\bigoplus_{m\in \omega_{M}} A_m\chi^m,}
\,\,\,\,\mbox{where}\,\,\,
A_m=H^0(C,\mathfrak{D}(m)):=\{f\in\mathbb{K}(X)\mid \dv
f+\mathfrak{D}(m)\geqslant0\},
$$
where the multiplication of homogeneous elements is given as in $\KK(X)$.

A $\sigma$-polyhedral divisor on smooth curve $C$ is called
{\itshape proper} if either $C$ is affine, or $C$ is projective and
the polyhedron $\deg \mathfrak{D}:=\sum\limits_{z\in C}
\Delta_{z}$ is a proper subset of $\sigma$.

The next theorem expresses the main results of \cite{AH}
specialized to the case of torus actions of complexity one.

\begin{theorem} \label{AH}

$(1)$  Let $C$ be a smooth curve and $\mathfrak{D}$ a proper
$\sigma$-polyhedral divisor on~$C$. Then the $M$-graded algebra
$A[C,\mathfrak{D}]$ is a normal finitely generated effectively
graded \linebreak $(\rk M+1)$-dimensional domain. Conversely, for each normal
finitely generated domain $A$ with a grading of complexity one there exist a
smooth curve $C$ and a proper $\sigma$-polyhedral divisor
$\mathfrak{D}$ on $C$ such that $A$ is isomorphic to
$A[C,\mathfrak{D}]$.

$(2)$ The $M$-graded domains $\spec A[C,\mathfrak{D}]$ and $\spec A[C,\mathfrak{D}']$ are isomorphic  if and only if for every $z\in C$ there exists a lattice vector $v_{z}\in N$ such that
$$
\mathfrak{D}=\mathfrak{D}'+\sum_{z} (v_{z}+\sigma)\cdot z,
$$
and for all $m\in\omega_{M}$ the divisor $\sum\limits_{z}\langle v_{z},m\rangle\cdot z$ is principal.
\end{theorem}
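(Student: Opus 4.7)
The plan is to treat the two parts of the theorem separately. For part (1) I would handle the forward direction (divisor to algebra) and the converse direction (algebra to divisor) independently, and then use the graded-isomorphism picture from part~(1) to deduce part~(2).

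\textbf{Forward direction of (1).} The first task is to verify the four properties of $A[C,\mathfrak{D}]$. That the ring is a domain is immediate from the embedding $A[C,\mathfrak{D}]\hookrightarrow\KK(C)\otimes_{\KK}\KK[M]$, where homogeneous elements $f\chi^m$ multiply in the obvious way and the right-hand side is a domain. Normality follows because an element integral over $A[C,\mathfrak{D}]$ can be decomposed into homogeneous pieces, each of which must satisfy the divisor inequality $\dv f+\mathfrak{D}(m)\geqslant 0$ at every $z\in C$, and this condition is itself integrally closed. The dimension equals $\rk M+1$ because the generic fibre of the rational map $\spec A[C,\mathfrak{D}]\dashrightarrow C$ is the affine toric variety of the cone $\sigma$, which has dimension $\rk M$. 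Finite generation is the subtlest point; it uses the properness of $\mathfrak{D}$ in an essential way. In the affine-base case one spreads out a finite generating set of $\omega_M$ together with finitely many functions in each generating degree; in the projective-base case the condition $\deg\mathfrak{D}\subsetneq\sigma$ guarantees that $\dim A_m$ is finite via Riemann--Roch and that the $A_m$ stabilise under a polyhedral growth, which yields finite generation by a standard Rees-algebra argument.

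\textbf{Converse of (1).} Starting from a normal finitely generated $M$-graded domain $A=\bigoplus_{m\in\omega_M}A_m\chi^m$ of complexity one, I would pass to the field of fractions $K:=\frk A$ and to its $T$-invariant subfield $K^T$. Effectiveness and complexity one imply $\td_{\KK}K^T=1$, so there is a unique smooth curve $C$ (affine, or projective if $A$ is not yet finitely generated over $K^T$ after localisation) with $\KK(C)=K^T$. The next key observation is that each homogeneous component of $K$ is a one-dimensional $K^T$-vector space: indeed, the quotient $K/K^T$ is equivariantly purely transcendental, and a nonzero $\chi^m$-isotypic rational function $f_m$ generates $K_m$ over $K^T$. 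Then $A_m$ identifies with $\{g\in K^T\mid gf_m\in A\}\cdot f_m$, which is the space of global sections $H^0(C,D_m)$ for a well-defined $\QQ$-divisor $D_m$ on $C$. The heart of the argument is to show that the assignment $m\mapsto D_m$ is concave and piecewise linear from $\omega_M$ to $\wdiv_{\QQ}(C)$, so that at every point $z\in C$ the coefficient function $m\mapsto\mathrm{coeff}_z D_m$ is the support function of a $\sigma$-tailed polyhedron $\Delta_z$. Properness of the resulting $\mathfrak{D}:=\sum\Delta_z\cdot z$ is forced by finite generation of $A$ and the finite dimensionality of the $A_m$.

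\textbf{Part (2).} Two proper polyhedral divisors give the same graded subring of $\KK(C)\otimes\KK[M]$ iff, after choosing compatible representatives $f_m$ for each degree, the divisors $D_m$ and $D_m'$ differ by $\dv g_m$ for some $g_m\in K^T$ depending linearly on $m$. Unwinding the support-function duality, this linear family of principal divisors corresponds precisely to translating each $\Delta_z$ by a lattice vector $v_z\in N$ subject to the global constraint that $\sum_z\langle v_z,m\rangle\cdot z$ be principal for every $m\in\omega_M$. This gives exactly the equivalence relation stated.

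The main obstacle I expect is the converse direction: establishing piecewise linearity of $m\mapsto D_m$ and then extracting the polyhedra $\Delta_z$ in a coherent way across all $z\in\supp\mathfrak{D}$. Finite generation of $A$ must be fed in to ensure that only finitely many $\Delta_z$ deviate from $\sigma$ and that $\deg\mathfrak{D}\subsetneq\sigma$ in the projective case; conversely, running this implication in the forward direction is what forces the properness hypothesis. Once these piecewise-linear/polyhedral dualities are under control, parts~(1) and~(2) both reduce to bookkeeping with support functions.
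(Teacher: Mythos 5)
You should first be aware that the paper contains no proof of this statement at all: it is quoted verbatim (specialized to curves) from Altmann--Hausen \cite{AH}, as the sentence preceding it says. So there is no internal argument to compare yours against; the only meaningful comparison is with the proof in \cite{AH} itself, which in the general case goes through relative toric geometry over $C$ (the relative spectrum of the sheaf of algebras $\bigoplus_m \OOO_C(\mathfrak{D}(m))$, contractions, and GIT), rather than the direct valuation-theoretic bookkeeping you propose. Your outline is closer to the streamlined complexity-one treatments that appeared later, and its overall shape (embed into $\KK(C)\otimes\KK[M]$, check normality degreewise, recover $D_m$ from a choice of generators $f_m$ of the one-dimensional $\KK(C)$-spaces $K_m$) is sound.

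The genuine gap is finite generation in the forward direction. Taking ``a finite generating set of $\omega_M$ together with finitely many functions in each generating degree'' does not work, because the multiplication maps $A_m\otimes A_{m'}\to A_{m+m'}$ are in general far from surjective: the functions $h_z$ are only concave and piecewise linear, so $h_z(m)+h_z(m')<h_z(m+m')$ on a large part of $\omega\times\omega$, and $A_{m+m'}$ contains sections that are not products of sections of lower degree (this already happens for a single polyhedron with a non-lattice vertex). The correct argument subdivides $\omega$ into the finitely many common linearity chambers of all the $h_z$, reduces to a quasi-linear $\mathfrak{D}$ on each chamber, and even there must pass to a Veronese subalgebra and a module-finiteness argument; the properness condition $\deg\mathfrak{D}\subsetneq\sigma$ enters precisely to keep each $H^0(C,\mathfrak{D}(m))$ finite-dimensional and the chamberwise algebras Noetherian. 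In the converse direction you have correctly identified the key claims (concavity and piecewise linearity of $m\mapsto D_m$, and the dual extraction of the $\sigma$-tailed polyhedra $\Delta_z$), but you defer them entirely, and they are the actual content of the theorem rather than ``bookkeeping''; as written the proposal is an accurate table of contents for a proof, not a proof.
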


The following result is obtained in~\cite[Section 11]{AH}.

\begin{proposition} \label{toric}
Let $\mathfrak{D}$ be a proper $\sigma$-polyhedral divisor on a smooth curve $C$,
$X=\spec A[C,\mathfrak{D}]$, and $T\times X\to X$ be the corresponding torus action.
Then this action can be realized as a subtorus action an a toric variety
if and only if either $C=\mathbb{A}^1$ and $\mathfrak{D}$ can be chosen supported in at most one point, or $C=\mathbb{P}^1$ and $\mathfrak{D}$ can be chosen supported in at most two points.
\end{proposition}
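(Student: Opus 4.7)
The plan is to deduce the statement from the Altmann--Hausen formalism of Theorem~\ref{AH}, specifically the downgrading/upgrading procedure that relates strongly convex cones and proper polyhedral divisors.

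For the forward direction, I would suppose the $T$-action on $X$ extends to an action of a bigger torus $\TT$ making $X$ an affine toric variety $X_\delta$ for a single strongly convex cone $\delta \subset (N_\TT)_\QQ$. Since the complexity is one, $\rk N_\TT = \rk N + 1$, so a choice of splitting gives a surjection $\pi\colon N_\TT \to \ZZ$ with kernel $N$. Downgrading $\delta$ along $\pi$ produces an Altmann--Hausen polyhedral divisor on the toric curve attached to the image $\pi(\delta) \subseteq \QQ$. As a cone in $\QQ$, $\pi(\delta)$ equals either $\QQ_{\geq 0}$ (up to sign) or all of $\QQ$; the corresponding toric curve is $\AA^1$ with one distinguished point in the first case and $\PP^1$ with two distinguished points $0, \infty$ in the second. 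Hence $\mathfrak{D}$ is represented with support at most at those distinguished points.

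For the converse, I would upgrade: given $\mathfrak{D} = \Delta_0 \cdot 0$ on $\AA^1$, or $\mathfrak{D} = \Delta_0 \cdot 0 + \Delta_\infty \cdot \infty$ on $\PP^1$, define a cone $\delta \subset N \oplus \ZZ$ by
$$
\delta = \cone\bigl(\{(v, 1) : v \in \Delta_0\} \cup (\sigma \times \{0\})\bigr)
$$
in the $\AA^1$ case, and additionally include the generators $\{(w, -1) : w \in \Delta_\infty\}$ in the $\PP^1$ case. The properness assumption $\deg \mathfrak{D} \subsetneq \sigma$ (which is vacuous on $\AA^1$) guarantees $\delta$ is strongly convex. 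Then $X_\delta$ is affine toric with acting torus of cocharacter lattice $N \oplus \ZZ$, and restricting the action to the subtorus with cocharacter lattice $N \oplus 0$ recovers the original $T$-action on $X$.

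The qualifier ``can be chosen'' invokes the uniqueness statement Theorem~\ref{AH}(2): the isomorphism class of $X$ determines $\mathfrak{D}$ only up to adding $\sum_z (v_z + \sigma) \cdot z$ with $\sum_z \langle v_z, m\rangle \cdot z$ principal for every $m$. On $\AA^1$ every $\QQ$-divisor is principal, so polyhedra at non-origin points can be absorbed into the one at $0$; on $\PP^1$ only degree-zero divisors are principal, but this still allows reducing to polyhedra supported on $\{0, \infty\}$ via a partial-fractions argument. The main obstacle I expect is the explicit verification that $\KK[X_\delta] \cong A[C, \mathfrak{D}]$ as $M$-graded algebras --- specifically, the check that for each $m \in \omega_M$ the value $h_z(m) = \min_{p \in \Delta_z}\langle p, m\rangle$ matches the height of $\delta^\vee \cap \pi^{-1}(m)$ above $\sigma^\vee$. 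This graded-piece matching is the combinatorial heart of the proof.
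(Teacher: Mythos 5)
The paper offers no proof of Proposition~\ref{toric}: it is quoted from \cite[Section~11]{AH}. Your downgrading/upgrading architecture is exactly the content of that reference, and most of it is sound. The upgrading (``if'') direction via the Cayley cone $\delta=\cone\bigl((\Delta_0,1)\cup(\sigma,0)\bigr)$, resp.\ with $(\Delta_\infty,-1)$ added, is correct and agrees with what the paper itself uses in the special case of Proposition~\ref{propaff}(2); the graded-piece matching you flag as the ``heart'' is a short computation ($(m,r)\in\delta^\vee$ unwinds to $-h_0(m)\le r\le h_\infty(m)$ and $m\in\omega$, which is exactly $t^r\in H^0(C,\mathfrak{D}(m))$), so leaving it as a to-do is acceptable. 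In the downgrading direction you should also account for the case $\pi(\delta)=\{0\}$, i.e.\ $\delta\subseteq N_{\QQ}\times\{0\}$, where the quotient curve is $\GG_m$ rather than $\AA^1$ or $\PP^1$; this edge case is invisible in the proposition as stated but your case list ``$\QQ_{\ge0}$ or $\QQ$'' silently skips it.

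The genuine error is in your final paragraph. Theorem~\ref{AH}(2) allows you to replace $\mathfrak{D}$ only by $\mathfrak{D}+\sum_z(v_z+\sigma)\cdot z$ with $v_z\in N$, i.e.\ to translate each polyhedron $\Delta_z$ by a lattice vector (subject to $\sum_z\langle v_z,m\rangle\cdot z$ being principal). This can never change the shape of any $\Delta_z$, and in particular cannot ``absorb'' a polyhedron with more than one vertex sitting at $z\ne 0$ into the polyhedron at $0$. If your absorption claim were true, every proper polyhedral divisor on $\AA^1$ could be rearranged to have one-point support, so every complexity-one $T$-variety over $\AA^1$ would be toric --- which contradicts the proposition you are proving (its ``only if'' half would be vacuous) and is false, e.g.\ for a divisor with two genuinely non-trivial polyhedra at distinct points of $\AA^1$. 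The correct justification of ``can be chosen'' runs the other way: the downgrading of the ambient toric structure produces a \emph{specific} divisor $\mathfrak{D}'$ supported on the toric boundary of $C$, and Theorem~\ref{AH}(2) then forces the original $\mathfrak{D}$ to differ from $\mathfrak{D}'$ only by lattice translations; in particular all but one (resp.\ two) of the original polyhedra must already be lattice translates of $\sigma$. With the last paragraph rewritten along these lines, your sketch is a faithful reconstruction of the cited argument.
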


Also we need a description of homogeneous LNDs of horizontal type for a $T$-variety $X$ of complexity one from~\cite{L1}. Below we follow the approach given in \cite{AL}. We have
$\KK[X]=A[C,\mathfrak{D}]$ for some $C$ and~$\mathfrak{D}$. It turns out that $C$ is isomorphic to $\AA^1$ or $\PP^1$ whenever there exists a homogeneous LND of horizontal type on $A[C,\mathfrak{D}]$, see \cite[Lemma~3.15]{L1}.

Let $C$ be $\AA^1$ or $\PP^1$, $\mathfrak{D}=\sum\limits_{z\in C}\Delta_z\cdot z$ a $\sigma$-polyhedral divisor on $C$, $z_0\in C$, $z_{\infty}\in C\backslash\{z_0\}$, and $v_z$ a vertex of $\Delta_z$ for every $z\in C$. Put $C'=C$ if $C=\AA^1$ and $C'=C\backslash\{z_{\infty}\}$ if $C=\PP^1$. A collection $\widetilde{\mathfrak{D}}=\{\mathfrak{D},z_0; v_z, \forall z\in C\}$ if $C=\AA^1$ and $\widetilde{\mathfrak{D}}=\{\mathfrak{D},z_0,z_{\infty}; v_z, \forall z\in C'\}$ if $C=\PP^1$ is called a {\itshape colored } $\sigma$-polyhedral divisor on $C$ if the following conditions hold:

\smallskip

$(*)$ $v_{\deg}:=\sum\limits_{z\in C'} v_z$ is a vertex of
$\deg \mathfrak{D}\hspace{-0.15cm}\mid_{C'}:=\sum\limits_{z\in C'}\Delta_z$;

\smallskip

$(**)$ $v_z\in N$ for all $z\in C'$, $z\ne z_0$.

\smallskip

Let $\widetilde{\mathfrak{D}}$ be a colored $\sigma$-polyhedral divisor on $C$ and $\delta\subseteq N_{\QQ}$ be the cone generated by ${\deg\mathfrak{D}\hspace{-0.15cm}\mid_{C'}-v_{\deg}}$. Denote by $\widetilde{\sigma}\subseteq (N\oplus\ZZ)_{\QQ}$ the cone generated by $(\delta,0)$ and $(v_{z_0},1)$ if $C=\AA^1$, and by $(\delta,0)$, $(v_{z_0},1)$ and
$(\Delta_{z_{\infty}}+v_{\deg}-v_{z_0}+\delta,-1)$ if $C=\PP^1$. By definition, put $d$ the minimal positive integer such that $d\cdot v_{z_0}\in N$. A pair $(\widetilde{\mathfrak{D}},e)$, where $e\in M$, is said to be {\itshape coherent } if

\smallskip

\begin{itemize}
\item[(i)] there exists $s\in\ZZ$ such that $\widetilde{e}=(e,s)\in M\oplus\ZZ$ is a Demazure root of the cone $\widetilde{\sigma}$ with distinguished ray
    $\widetilde{\rho}=(d\cdot v_{z_0},d)$;

\smallskip

\item[(ii)] $\langle v,e\rangle\geqslant 1+\langle v_z,e\rangle$ for all
$z\in C'\backslash \{z_0\}$ and all vertices $v\ne v_{z}$ of the polyhedron $\Delta_{z}$;

\smallskip

\item[(iii)] $d\cdot \langle v,e\rangle\geqslant 1+\langle v_{z_0},e\rangle$ for all vertices $v\ne v_{z_0}$ of the polyhedron $\Delta_{z_0}$;

\smallskip

\item[(iv)] if $Y=\mathbb{P}^1$, then
$d\cdot \langle v,e\rangle\geqslant -1-d\cdot \sum\limits_{z\in Y'} \langle v_{z},e\rangle$ for all vertices $v$ of the polyhedron~$\Delta_{z_{\infty}}$.
\end{itemize}

\smallskip

It follows from \cite[Theorem~1.10]{AL} that homogeneous LNDs of horizontal type on $A[C,\mathfrak{D}]$ are in bijection with the coherent pairs $(\widetilde{\mathfrak{D}},e)$. Namely, let $(\widetilde{\mathfrak{D}},e)$ be a coherent pair. Without loss of generality we may assume that $z_0=0$, $z_{\infty}=\infty$ if $C=\PP^1$, and $v_z=0\in N$ for all $z\in
C'\backslash\{z_0\}$. Let $\KK(C)=\KK(t)$. Then the homogeneous LND of horizontal type corresponding to $(\widetilde{\mathfrak{D}},e)$ is given by

\smallskip

\begin{equation} \label{lnd}
\partial(\chi^m\cdot t^r)=d(\langle v_0,m\rangle+r)\chi^{m+e}\cdot
t^{r+s}\,\,\,\,\,\,\,\, \mbox{for all}\,\,\, m\in M, r\in\ZZ.
\end{equation}

\smallskip

In particular, the vector $e$ is the degree of the derivation $\partial$.


\section{The affine case} \label{sec3}
Let $(X,x)$ be an equivariant embedding of the group $\GG=(\GG_m)^{n-1}\times\GG_a$, where
$n=\dim X$. In this section we assume that $X$ is normal and affine. Let us denote the subgroup
$(\GG_m)^{n-1}$ of $\GG$ by $T$. Since the action of $T$ on $X$ is effective, it has compelxity one and defines an effective grading of the algebra $\KK[X]$ by the lattice $M$. In particular, the graded algebra $\KK[X]$ has the form $A[C,\mathfrak{D}]$ for some smooth curve $C$ and some proper $\sigma$-polyhedral divisor on $C$, where $\sigma$ is a cone in $N_{\QQ}$.

Since the action of the subgroup $\GG_a$ commutes with $T$-action on $X$, the corresponding homogeneous LND on $\KK[X]$ has degree zero. Moreover, the group $\GG$ acts on $X$ with an open orbit. It implies that the $\GG_a$-action on $X$ is of horizontal type, and hence either $C=\mathbb{A}^1$ or $C=\mathbb{P}^1$.

\begin{proposition} \label{deg0}
Let $X=\spec A[C,\mathfrak{D}]$ be a $T$-variety of complexity one. Suppose that there exists a homogeneous LND of horizontal type and of degree zero on $A[C,\mathfrak{D}]$. Then
\begin{itemize}
\item[(1)]
if $\,\,C=\mathbb{A}^1$, then one can assume (via Theorem \ref{AH}) that $\mathfrak{D}$ is a trivial $\sigma$-polyhedral divisor;
\item[(2)]
if $\,\,C=\mathbb{P}^1$, then one can choose $\mathfrak{D}=\Delta_{\infty}\cdot[\infty]$,
where $\Delta_{\infty}\varsubsetneq \sigma$ is some $\sigma$-tailed polyhedron.
\end{itemize}
\end{proposition}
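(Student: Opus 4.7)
The plan is to use the bijection recalled at the end of Section~\ref{sec2} between homogeneous LNDs of horizontal type on $A[C,\mathfrak{D}]$ and coherent pairs $(\widetilde{\mathfrak{D}},e)$, together with the isomorphism criterion of Theorem~\ref{AH}(2). By formula~(\ref{lnd}) the degree of the derivation equals $e$, so the hypothesis becomes $e=0$. The strategy is: first, read off from conditions (i)--(iv) specialized at $e=0$ that every polyhedron of $\mathfrak{D}$ supported on $C'=C\setminus\{z_\infty\}$ (all of $C$ if $C=\mathbb{A}^1$) is a lattice translate of the tail cone $\sigma$; then apply Theorem~\ref{AH}(2) to shift these translates to $\sigma$ itself.

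First I would analyze condition~(i) at $e=0$. It demands an $s\in\mathbb{Z}$ such that $\widetilde{e}=(0,s)$ is a Demazure root of $\widetilde{\sigma}$ with distinguished ray generated by $(d\cdot v_{z_0},d)$. The pairing $\langle (d v_{z_0},d),(0,s)\rangle=ds$ must equal $-1$, which forces $d=1$ (so $v_{z_0}\in N$) and $s=-1$. One then checks that $(0,-1)$ pairs non-negatively with the remaining generators of $\widetilde{\sigma}$, namely $(\delta,0)$ (pairing $0$) and, in the $\mathbb{P}^1$-case, $(\Delta_{z_\infty}+v_{\deg}-v_{z_0}+\delta,-1)$ (pairing $1$), so $(0,-1)$ is genuinely a Demazure root of $\widetilde{\sigma}$.

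Conditions~(ii) and~(iii) specialized at $e=0$ both reduce to the absurd inequality $0\geqslant 1$ and must therefore be vacuous: for every $z\in C'$ the polyhedron $\Delta_z$ has $v_z$ as its only vertex, hence $\Delta_z=v_z+\sigma$ with $v_z\in N$. Condition~(iv) at $e=0$ becomes $0\geqslant -1$, which is automatic and imposes no constraint on $\Delta_{z_\infty}$.

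Finally I would invoke Theorem~\ref{AH}(2) to shift the $v_z$'s to the origin. If $C=\mathbb{A}^1$ every divisor on $C$ is principal, so the shift by $-v_z$ at each $z\in\supp\mathfrak{D}$ is permissible without restriction, producing the trivial $\sigma$-polyhedral divisor and proving~(1). If $C=\mathbb{P}^1$ the principality condition in Theorem~\ref{AH}(2) forces the total shift to have degree zero; flattening $\Delta_z\mapsto\sigma$ at every $z\in C'$ therefore requires a compensating translation by $v_{\deg}:=\sum_{z\in C'}v_z$ at $z_\infty$, yielding the isomorphic divisor $(\Delta_{z_\infty}+v_{\deg})\cdot[\infty]$. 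A short Minkowski-sum calculation (using $\Delta_{z_\infty}+\sigma=\Delta_{z_\infty}$) identifies $\Delta_{z_\infty}+v_{\deg}$ with $\deg\mathfrak{D}$, which is a proper subset of $\sigma$ by properness of the original divisor, yielding~(2). The main obstacle is the bookkeeping in condition~(i) — extracting $d=1$ and verifying every Demazure root axiom for $\widetilde{e}=(0,-1)$ against the full generating set of $\widetilde{\sigma}$ — and, in the $\mathbb{P}^1$-case, tracking the shift at $z_\infty$ carefully enough to confirm $\Delta_{\infty}\varsubsetneq\sigma$.
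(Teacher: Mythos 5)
Your proof follows essentially the same route as the paper's: extract $d=1$, $s=-1$, $v_{z_0}\in N$ from condition~(i) at $e=0$, observe that (ii)--(iii) force each $\Delta_z$ ($z\in C'$) to be a lattice translate of $\sigma$, and normalize via Theorem~\ref{AH}(2). Your explicit handling of the degree-zero (principality) constraint on $\PP^1$ --- compensating at $z_\infty$ by $v_{\deg}$ and identifying the resulting polyhedron with $\deg\mathfrak{D}\varsubsetneq\sigma$ --- is a point the paper's proof leaves implicit, and it is carried out correctly.
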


\begin{proof}
Let $(\widetilde{\mathfrak{D}},0)$ be the coherent pair corresponding to the homogeneous LND of horizontal type. Without loss of generality we may assume that $z_0=0$ and $z_{\infty}=\infty$ if $C=\mathbb{P}^1$. By definition of a coherent pair, there exists $s\in \mathbb{Z}$ such that $(0,s)$ is a Demazure root of the cone $\widetilde{\sigma}$ with distinguished ray $(dv_0,d)$. It implies that $s=-1$, $d=1$, and hence $v_0\in N$. Further, the inequality ${\langle v,0\rangle\geqslant 1+\langle v_z,0\rangle}$ should be satisfied for every $z\in C'$ and every vertex $v\ne v_{z}$ of $\Delta_{z}$. It means that each polyhedron $\Delta_{z}$, where $z\in C'$, has only one vertex $v_z$. Replacing $\sigma$-polyhedral divisor $\mathfrak{D}$ with ${\mathfrak{D}'=\mathfrak{D}+\sum_{z\in C'} (-v_{z}+\sigma)\cdot z}$ and using Theorem \ref{AH}, we obtain the assertion. The condition $\Delta_{\infty}\varsubsetneq  \sigma$ follows from the fact that $\mathfrak{D}$ is a proper $\sigma$-polyhedral divisor.
\end{proof}

\begin{corollary} \label{tor}
Under the conditions of Proposition~\ref{deg0} the variety $X$ is toric with $T$ being a subtorus of the action torus $\TT$.
\end{corollary}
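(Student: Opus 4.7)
The corollary follows essentially by concatenating the two preceding results, so the plan is short. First I would apply Proposition~\ref{deg0} to normalize the proper $\sigma$-polyhedral divisor $\mathfrak{D}$. This gives two cases: either $C=\AA^1$ and $\mathfrak{D}$ is trivial, in which case $\supp\mathfrak{D}=\emptyset$; or $C=\PP^1$ and $\mathfrak{D}=\Delta_\infty\cdot[\infty]$ with $\Delta_\infty\varsubsetneq\sigma$, in which case $\supp\mathfrak{D}=\{\infty\}$. In either case $\mathfrak{D}$ is supported in at most one point, which is certainly at most two points when $C=\PP^1$.

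Next I would invoke Proposition~\ref{toric}. Its hypothesis is precisely ``$C=\AA^1$ with $\mathfrak{D}$ supported in at most one point, or $C=\PP^1$ with $\mathfrak{D}$ supported in at most two points,'' and by the previous paragraph this hypothesis holds. The conclusion of Proposition~\ref{toric} is that the $T$-action on $X$ can be realized as a subtorus action on a toric variety, which is exactly the assertion of the corollary: there exists a torus $\TT\supseteq T$ acting on $X$ with an open orbit, so that $X$ is toric with respect to $\TT$ and $T$ is a subtorus of $\TT$.

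There is essentially no obstacle; the only minor care needed is to observe that replacing $\mathfrak{D}$ by the equivalent divisor $\mathfrak{D}'$ furnished by Proposition~\ref{deg0} does not change the $T$-variety $X$ itself, as guaranteed by Theorem~\ref{AH}(2). Once this is noted, the two propositions fit together directly to give the result.
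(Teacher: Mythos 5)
Your argument is correct and is exactly the paper's proof: the paper simply cites Propositions~\ref{toric} and~\ref{deg0}, and you have spelled out the same concatenation, including the (appropriate) appeal to Theorem~\ref{AH}(2) for the replacement of $\mathfrak{D}$.
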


\begin{proof}
It follows immediately from Propositions~\ref{toric} and~\ref{deg0}.
\end{proof}

The next proposition is a specification of Corollary~\ref{tor}. In particular, it shows that the $\mathbb{G}_a$-action on $X$ is normalized by the acting torus $\mathbb{T}$.

\begin{proposition} \label{propaff}
Under the conditions of Proposition~\ref{deg0},
\begin{itemize}
\item[(1)]
if $\,\,C=\AA^1$, then $X\cong Y\times\AA^1$, where $Y$ is the toric variety corresponding to the cone $\sigma$ and $\GG_a$ acts on $\AA^1$ by translations;
\item[(2)]
if $C=\PP^1$, then $X$ is the toric variety with the cone
$\widetilde{\sigma}\subset N\oplus\ZZ$ generated by $(\sigma,0),(\Delta_{\infty},-1)$ and $(0,1)$. The $\GG_a$-action on $X$ is given by Demazure root
$\widetilde{e}=(0,-1)\in M\oplus\ZZ$ of the cone $\widetilde{\sigma}$.
\end{itemize}
\end{proposition}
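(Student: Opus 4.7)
The plan is to unwind the normal forms supplied by Proposition~\ref{deg0} and, in each case, (a)~identify $A[C,\mathfrak{D}]$ with the semigroup algebra of a toric cone, and (b)~read off the $\GG_a$-action from formula~(\ref{lnd}). Throughout I use the parameters that the proof of Proposition~\ref{deg0} pinned down: $e=0$, $d=1$, $s=-1$, and, after the allowed shift, $v_z=0$ for every $z\in C'$; in particular $v_0=0$.

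For part~(1), triviality of $\mathfrak{D}$ gives $\mathfrak{D}(m)=0$ for every $m\in\omega_M$, hence $A_m=H^0(\AA^1,\OOO_{\AA^1})=\KK[t]$. Therefore
\[
A[\AA^1,\mathfrak{D}] \;=\; \bigoplus_{m\in\omega_M}\KK[t]\,\chi^m \;\cong\; \KK[\omega_M]\otimes_{\KK}\KK[t],
\]
which is the coordinate ring of $Y\times\AA^1$, with $Y=\spec\KK[\omega_M]$ the affine toric variety of~$\sigma$. Plugging the parameters above into~(\ref{lnd}) yields $\partial(\chi^m t^r)=r\,\chi^m t^{r-1}$, so $\partial$ is differentiation in the variable $t$ and its flow is translation on the $\AA^1$ factor.

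For part~(2) I analyse $A[\PP^1,\Delta_{\infty}\cdot[\infty]]$. For $m\in\omega_M$ one has $\mathfrak{D}(m)=h_\infty(m)\cdot[\infty]$ with $h_\infty(m)=\min_{p\in\Delta_\infty}\langle p,m\rangle$, and the inclusion $\Delta_\infty\varsubsetneq\sigma$ forces $\Delta_\infty\subseteq\sigma$, hence $h_\infty(m)\geq 0$. Sections are then polynomials $f\in\KK[t]$ of degree at most $h_\infty(m)$, so homogeneous elements of the algebra are exactly the monomials $t^k\chi^m$ with $k\in\ZZ_{\geq 0}$ and $k\leq h_\infty(m)$. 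Regarding the exponent as $(m,k)\in M\oplus\ZZ$, the three conditions $m\in\omega_M$, $k\geq 0$, and $\langle p,m\rangle-k\geq 0$ for all $p\in\Delta_\infty$ are precisely the statements that $(m,k)$ pairs non-negatively with the three blocks of generators $(\sigma,0)$, $(0,1)$, $(\Delta_\infty,-1)$ of $\widetilde\sigma$. Consequently
\[
A[\PP^1,\mathfrak{D}] \;\cong\; \KK[\widetilde{\sigma}^{\vee}\cap(M\oplus\ZZ)],
\]
the affine toric variety with cone $\widetilde\sigma\subset(N\oplus\ZZ)_{\QQ}$.

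It remains to identify the $\GG_a$-action as the one associated to $\widetilde e=(0,-1)$. First, $\widetilde e$ is a Demazure root of $\widetilde\sigma$ with distinguished ray spanned by $(0,1)$: indeed $\langle\widetilde e,(0,1)\rangle=-1$, while every other primitive ray generator of $\widetilde\sigma$ is either of the form $(n_\rho,0)$ (giving pairing $0$) or a positive rational multiple of $(v,-1)$ for some vertex $v$ of $\Delta_\infty$ (giving a strictly positive pairing). Second, formula~(\ref{lnd}) with $(v_0,d,s,e)=(0,1,-1,0)$ gives $\partial(\chi^m t^k)=k\,\chi^m t^{k-1}$, matching the toric formula $\partial_{\widetilde e}(\chi^{(m,k)})=\langle(0,1),(m,k)\rangle\chi^{(m,k)+\widetilde e}$. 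The main delicate point is the ray analysis of $\widetilde\sigma$: one must verify that combinations of the three generating blocks produce no extremal rays outside the slices at heights $0$ and $-1$ (apart from $(0,1)$ itself) and that $\widetilde\sigma$ is strongly convex; both facts follow from $\Delta_\infty\subseteq\sigma$, which keeps every element of $\widetilde\sigma$ inside $\sigma\times\RR$ and excludes $(0,-c)$ for $c>0$.
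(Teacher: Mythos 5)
Your proof is correct and follows essentially the same route as the paper's: unwind the normal forms from Proposition~\ref{deg0}, identify $A[C,\mathfrak{D}]$ with $\KK[Y]\otimes\KK[t]$ in case (1) and with the semigroup algebra $\KK[\widetilde{\sigma}^{\vee}\cap(M\oplus\ZZ)]$ in case (2), and read the $\GG_a$-action off formula~\eqref{lnd}; you merely spell out the duality check and the root verification that the paper leaves implicit. One harmless imprecision: the strong convexity of $\widetilde{\sigma}$ (exclusion of $(0,-c)$ for $c>0$) requires the strict inclusion $\Delta_{\infty}\varsubsetneq\sigma$, equivalently $0\notin\Delta_{\infty}$, not merely $\Delta_{\infty}\subseteq\sigma$.
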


\begin{proof} Let $\mathbb{K}(C)=\mathbb{K}(t)$. If $C=\mathbb{A}^1$ then $\mathfrak{D}$ is trivial and
$$
A[C,\mathfrak{D}]=\bigoplus_{m\in \omega_M} \mathbb{K}[t]\cdot\chi^m=\mathbb{K}[\omega_M]\otimes\mathbb{K}[t]=
\mathbb{K}[Y]\otimes\mathbb{K}[t].
$$
Hence $X\cong Y\times\AA^1$. Applying formula \eqref{lnd}, we obtain that the homogeneous LND is given by
\begin{equation}\label{lnd'}
\partial(\chi^m\cdot t^r)=r\chi^{m}\cdot t^{r-1}
\end{equation}
for all $m\in \omega_M$ and $r\in\ZZ_{\geqslant0}$. Thus
$\GG_a$ acts on $Y\times\AA^1$ as $(y,t)\mapsto (y,t+s)$.

\smallskip

If $C=\PP^1$ then $\mathfrak{D}=\Delta_{\infty}\cdot[\infty]$ and we obtain
$$
A[C,\mathfrak{D}]=\bigoplus_{m\in \omega_M}\bigoplus_{r=0}^{h_{\infty}(m)}\KK\chi^m \cdot t^r=
\bigoplus_{(m,r)\in \widetilde{\omega}_{\widetilde{M}}}
\KK\chi^m\cdot t^r=\KK[\widetilde{\omega}_{\widetilde{M}}],
$$
where $\widetilde{M}=M\oplus\ZZ$ and $\widetilde{\omega}\subset\widetilde{M}_{\QQ}$ is the cone
dual to $\widetilde{\sigma}$. So we see that $A[C,\mathfrak{D}]$ is a semigroup algebra and $X$ is a toric variety with the cone $\widetilde{\sigma}$. In this case formula~\eqref{lnd'} gives the
LND corresponding to the Demazure root $\widetilde{e}=(0,-1)$.
\end{proof}


\section{The complete case} \label{sec4}

In this section we study equivariant compactifications of the group $\GG$.
First we briefly recall the main ingredients of the Cox construction, see~\cite[Chapter~1]{ADHL} for more details.

Let $X$ be a normal variety with finitely generated divisor class group $\cl(X)$ and only constant invertible regular functions.

Suppose that $\cl(X)$ is free. Denote by $\wdiv(X)$ the group of Weil divisors on $X$ and fix a subgroup $K \subseteq \wdiv(X)$ which maps onto $\cl(X)$ isomorphically. The {\itshape Cox ring} of the variety $X$ is defined as
$$
R(X)=\bigoplus_{D\in K} H^0(X,D),
$$
where $H^0(X,D)=\{f\in\KK(X)\mid \dv f+D\geqslant0\}$ and multiplication on homogeneous components coincides with multiplication in $\KK(X)$ and extends to $R(X)$ by linearity.

If $\cl(X)$ has torsion, we choose finitely generated subgroup $K\subseteq\wdiv(X)$ that projects to $\cl(X)$ surjectively. Denote by $K_0\subset K$ the kernel of this projection. Take compatible
bases $D_1,\ldots,D_s$ and $D_1^0=d_1D_1,\ldots,D_r^0=d_rD_r$ in $K$ and $K_0$ respectively. Let us choose the set of rational functions $\EuScript{F}=\{F_D\in\KK(X)^{\times} \,:\, D\in K_0\}$ such that $\dv(F_D)=D$ and $F_{D+D'}=F_DF_{D'}$. Suppose that $D,D'\in K$ and $D-D'\in K_0$. A map $f\mapsto F_{D-D'}f$ is an isomorphism of the vector spaces $H^0(X,D)$ and $H^0(X,D')$. The linear span of the elements $f-F_{D-D'}f$ over all $D,D'$ with $D-D'\in K_0$ and all $f\in H^0(X,D)$ is an ideal $I(K,\EuScript{F})$ of the graded ring $T_K(X):=\bigoplus_{D\in K}
H^0(X,D)$. The Cox ring of the variety $X$ is given by
$$
R(X)=T_K(X)/I(K,\EuScript{F}).
$$
This construction does not depend on the choice of $K$ and $\EuScript{F}$,
see~\cite[Lemma~3.1 and Proposition~3.2]{Arz}.

Suppose that the Cox ring $R(X)$ is finitely generated. Then ${\overline{X}:=\spec R(X)}$ is a normal affine variety with an action of the  quasitorus $H_{X} := \spec\KK[\cl(X)]$. There is an open $H_{X}$-invariant subset $\widehat{X}\subseteq \overline{X}$ such that the complement $\overline{X}\backslash\widehat{X}$ is of codimension at least two in $\overline{X}$,
there exists a good quotient $p_X\colon\widehat{X}\rightarrow\widehat{X}/\!/H_{X}$, and the quotient space $\widehat{X}/\!/H_{X}$ is isomorphic to $X$. So we have the following diagram
$$
\begin{CD}
\widehat{X} @>{i}>> \overline{X}=\spec R(X)\\
@VV{/\!/H_{X}}V  \\
X
\end{CD}
$$
Let us return to equivariant compactifications of $\GG$.

\begin{proposition} \label{pcomplete}
Let $\GG=T\times\GG_a$ and $X$ be a normal compactification of $\GG$. Then the $T$-action on $X$ can be extended to an action of a bigger torus $\TT$ such that $\TT$ normalizes
$\GG_a$ and $X$ is a toric variety with the acting torus $\TT$.
\end{proposition}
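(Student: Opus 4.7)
The plan is to reduce the complete case to Proposition~\ref{propaff} via the Cox construction. Since $\GG$ acts on the complete variety $X$ with an open orbit, the complement $X\setminus\GG$ is a finite union of prime divisors, so $\cl(X)$ is finitely generated, and $\KK[X]^*=\KK^*$ as $X$ is complete. The first substantive step is to establish that the Cox ring $R(X)$ is finitely generated; this I expect to be the main technical obstacle, to be handled either by invoking general finite generation results for varieties with a torus action of complexity one or via a direct argument based on the orbit structure. Granting finite generation, set $\overline{X}=\spec R(X)$ and $H_X=\spec\KK[\cl(X)]$, and work with the good quotient $p_X\colon\widehat{X}\to X$ whose complement in $\overline{X}$ has codimension at least two.

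Next I would lift the $\GG$-action on $X$ to an action on $\overline{X}$ commuting with $H_X$. This is standard for connected affine groups acting on varieties with a finitely generated Cox ring: the $\GG_a$-action lifts uniquely because $\GG_a$ has only trivial characters, and the $T$-action lifts after an appropriate choice of compatible $H_X$-grading. Altogether the commutative linear algebraic group $\widetilde{\GG}:=T\times H_X\times\GG_a$, of corank one, acts on the normal affine variety $\overline{X}$. The preimage $p_X^{-1}(\GG)$ is open and $\widetilde{\GG}$-invariant, and since $H_X$ acts freely over the open $\GG$-orbit (where $\GG$ has trivial stabilizer), this preimage is a single $\widetilde{\GG}$-orbit of dimension $\dim\overline{X}$. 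Thus $\overline{X}$ is an equivariant $\widetilde{\GG}$-embedding.

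Proposition~\ref{propaff} then applies to $\overline{X}$ with its complexity-one torus $T\times H_X$ and the horizontal, degree-zero homogeneous locally nilpotent derivation on $\KK[\overline{X}]$ coming from the lifted $\GG_a$. It follows that $\overline{X}$ is toric for some ambient torus $\widetilde{\TT}\supseteq T\times H_X$ which normalizes $\GG_a$. To finish I descend: since $H_X\subseteq\widetilde{\TT}$, the quotient $\TT:=\widetilde{\TT}/H_X$ is a torus of dimension $n$ acting on $X=\widehat{X}/\!/H_X$ and making it toric. The map $T\to\TT$ is injective because $T$ and $H_X$ meet trivially inside $\widetilde{\TT}$, so $\TT$ extends the original $T$-action; and $\TT$ normalizes the descended $\GG_a$-action on $X$ because $\widetilde{\TT}$ normalized $\GG_a$ on $\overline{X}$. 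The only truly nontrivial inputs are the finite generation of $R(X)$ and the equivariant lifting; everything else is formal given the affine case.
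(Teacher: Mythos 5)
Your overall route --- finite generation of the Cox ring via the rational complexity-one torus action, lifting the $\GG$-action to $\overline{X}=\spec R(X)$, applying the affine result (Proposition~\ref{propaff}) there, and descending the resulting bigger torus to $X$ --- is the same as the paper's. One technical point is stated too optimistically: in general only a finite cover of $T$ lifts to $\overline{X}$. The paper works with a finite epimorphism $\epsilon\colon \GG'=T'\times\GG_a\to\GG$ given by \cite[Theorem~4.2.3.1]{ADHL} and then with the torus $T'H_X^0$ inside $\aut(\overline{X})$, rather than with the external product $T\times H_X$, which need not act effectively (and $H_X$ may be disconnected when $\cl(X)$ has torsion). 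Your claim that $T$ itself lifts ``after an appropriate choice of compatible $H_X$-grading'' is not justified, though this does not change the shape of the argument.

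The genuine gap is in the descent step. Proposition~\ref{propaff} produces a big torus $\overline{\TT}$ acting on the affine variety $\overline{X}$, but to obtain an action on $X=\widehat{X}/\!/H_X$ you must first know that the open subset $\widehat{X}\subseteq\overline{X}$ is invariant under $\overline{\TT}$; you pass over this silently. It is not automatic: $\overline{\TT}$ is a new group, not part of the original data, and a priori it could move $\widehat{X}$ off itself, in which case its action would not descend. This is exactly where the completeness of $X$ enters the paper's proof: it invokes \'Swi\c{e}cicka's result \cite[Corollary~2.5]{Sw} to conclude that $\widehat{X}$ is $\overline{\TT}$-invariant, and only then descends the action via \cite[Lemma~4.2.1.3]{ADHL}. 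The fact that the non-complete case is handled in Theorem~\ref{tmain} by a detour through an equivariant completion, rather than by running this Cox argument directly, is a sign that this invariance is a real issue and not a formality; your proof needs to supply it.
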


\begin{proof} The variety $X$ is rational with torus action of complexity one. By~\cite[Theorem~4.3.1.5]{ADHL}, the divisor class group $\cl(X)$ and the Cox ring $R(X)$ are finitely generated.

There exists a finite epimorphism $\epsilon\colon\GG'\to\GG$ of connected linear algebraic groups and an action $\GG'\times\widehat{X}\to\widehat{X}$ which commutes with the quasitorus $H_X$
and $p_X(g'\cdot\widetilde{x})=\epsilon(g')\cdot p_X(\widetilde{x})$ for all $g'\in\GG'$ and $\widetilde{x}\in\widetilde{X}$, see~\cite[Theorem~4.2.3.1]{ADHL}. The group $\GG'$ has a form
$T'\times\GG_a$, where $\epsilon$ defines a finite epimorphism of tori $T'\to T$ and is identical
on $\GG_a$.

Since $\overline{X}=\spec\KK[\widehat{X}]$, the action of $\GG'$ extends to the affine variety $\overline{X}$. This variety is an embedding of the group $(T'H_X^0)\times\GG_a$. By Proposition~\ref{propaff}, it is toric with an acting torus $\overline{\TT}$ normalizing the $\GG_a$-action and $T'H_{X}^0$ is a subtorus of $\overline{\TT}$. Since $X$ is complete, \cite[Corollary~2.5]{Sw} implies that the subset $\widetilde{X}$ is invariant under the torus $\overline{\TT}$. By~\cite[Lemma~4.2.1.3]{ADHL}, the action of $\overline{\TT}$ descends to
an action of the torus $\TT:=\overline{\TT}/H_X^0$ on $X$. Here $\TT$ normalizes $\GG_a$, its action extends the action of $T$ on $X$, and $X$ is toric with respect to $\TT$.
\end{proof}


\section{Toric varieties and Demazure roots} \label{sec5}

We keep notations of Section~\ref{sec2}. Let $X$ be a toric variety of dimension $n$ with an acting torus $\TT$ and $\Sigma$ be the corresponding fan of convex polyhedral cones in the space $N_{\QQ}$, see~\cite{Fu} or \cite{CLS} for details.

As before, let $\Sigma(1)$ be the set of rays of the fan $\Sigma$ and $n_{\rho}$ be the primitive lattice vector on the ray $\rho$. For $\rho\in\Sigma(1)$ we consider the set $S_{\rho}$ of all vectors $e\in M$ such that
\begin{enumerate}
\item
$\langle n_{\rho},e\rangle=-1\,\,\mbox{and}\,\, \langle n_{\rho'},e\rangle\geqslant0
\,\,\,\,\forall\,\rho'\in \sigma(1), \,\rho'\ne\rho$;
\smallskip
\item
if $\sigma$ is a cone of $\Sigma$ and $\langle v,e\rangle=0$ for all $v\in\sigma$, then the cone generated by $\sigma$ and $\rho$ is in $\Sigma$ as well.
\end{enumerate}

Note that condition~(1) implies condition~(2) if $\Sigma$ is a maximal fan with support $|\Sigma|$. This is the case if $X$ is affine or complete.

The elements of the set $\mathfrak{R}:=\bigsqcup\limits_{\rho} S_{\rho}$ are called the {\itshape Demazure roots} of the fan $\Sigma$, cf.~\cite[Definition~4]{De} and \cite[Section~3.4]{Oda}.
Again elements $e\in\mathfrak{R}$ are in bijections with $\GG_a$-actions on $X$ normalized by the acting torus. Let us denote the corresponding one-parameter subgroup of $\aut(X)$ by $H_e$.

We recall basic facts from toric geometry. There is a bijection between cones $\sigma\in\Sigma$ and $\TT$-orbits $\OOO_{\sigma}$ on $X$ such that $\sigma_1\subseteq\sigma_2$ if and only if $\OOO_{\sigma_2}\subseteq\overline{\OOO_{\sigma_1}}$. Here $\dim\OOO_{\sigma}=n-\dim\langle\sigma\rangle$. Moreover, each cone $\sigma\in\Sigma$ defines an open affine $\TT$-invariant subset $U_{\sigma}$ on $X$ such that $\OOO_{\sigma}$ is a unique closed $\TT$-orbit on $U_{\sigma}$ and $\sigma_1\subseteq\sigma_2$ if and only if $U_{\sigma_1}\subseteq U_{\sigma_2}$.

Let $\rho_e$ be the distinguished ray corresponding to a root $e$, $n_e$ be the primitive lattice vector on $\rho_e$, and $R_e$ be the one-parameter subgroup of $\TT$ corresponding to $n_e$.

\smallskip

Our aim is to describe the action of $H_e$ on $X$.

\begin{proposition} \label{phe}
For every point $x\in X\setminus X^{H_e}$ the orbit $H_ex$ meets exactly two $\TT$-orbits $\OOO_1$ and $\OOO_2$ on $X$, where $\dim\OOO_1=\dim\OOO_2+1$. The intersection $\OOO_2\cap H_ex$ consists of a single point, while
$$
\OOO_1\cap H_ex=R_ey \quad \text{for any} \quad y\in\OOO_1\cap H_ex.
$$
\end{proposition}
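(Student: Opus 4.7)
My plan is to reduce the statement to an explicit computation in a well-chosen affine toric chart. Let $\sigma\in\Sigma$ be the unique cone with $x\in\OOO_\sigma$. Since $\TT$ normalises $H_e$ via $\tau H_e(t)\tau^{-1}=H_e(\chi^e(\tau)t)$, membership in $X^{H_e}$ is constant on $\TT$-orbits, and writing $x=\tau\cdot x_\sigma$ with $x_\sigma$ the distinguished point of $\OOO_\sigma$ gives $H_ex=\tau\cdot H_ex_\sigma$ as a set (after a rescaling of the $\GG_a$-parameter). It therefore suffices to treat the case $x=x_\sigma$.

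To place the entire orbit in a single affine chart, I set $\sigma^*=\sigma$ if $\rho_e\in\sigma(1)$, and $\sigma^*:=\sigma+\rho_e$ otherwise. In the second alternative, the formula
$$\chi^m(H_e(t)x_\sigma)=\sum_{k=0}^{a}\binom{a}{k}\,t^k\,\chi^{m+ke}(x_\sigma),\qquad a:=\langle n_{\rho_e},m\rangle,$$
together with $\chi^{m'}(x_\sigma)=1$ for $m'\in\sigma^\perp\cap M$ and $0$ otherwise, shows that the hypothesis $x_\sigma\notin X^{H_e}$ forces $e\in\sigma^\perp$, and Demazure's condition~(2) then gives $\sigma^*\in\Sigma$. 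In either case $\rho_e\in\sigma^*(1)$, so $n_{\rho_e}\in\sigma^*$; this makes $\partial_e$ locally nilpotent on $\KK[\sigma^{*\vee}\cap M]$, which ensures that $U_{\sigma^*}$ is $H_e$-stable and that $H_ex_\sigma\subseteq U_{\sigma^*}$.

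Inside $U_{\sigma^*}$ the formula above pins down $H_e(t)x_\sigma$ orbit-by-orbit. When $\sigma^*=\sigma$, the equation $\langle n_{\rho_e},m+ke\rangle=a-k=0$ forces $k=a$ in the sum, and combined with $\langle n_{\rho'},e\rangle=0$ for every $\rho'\in\sigma(1)\setminus\{\rho_e\}$ (also forced by $x_\sigma\notin X^{H_e}$), it shows that for $t\ne 0$ the point $H_e(t)x_\sigma$ lies in $R_e\cdot x_{\tau_U}\subseteq\OOO_{\tau_U}$, where $\tau_U$ is the cone spanned by $\sigma(1)\setminus\{\rho_e\}$. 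That $\tau_U$ is a codimension-one face of $\sigma$ is witnessed by the supporting form $-e\in\sigma^\vee\cap M$, which pairs to $1$ with $n_{\rho_e}$ and to $0$ with every other ray generator of $\sigma$. When $\sigma^*=\sigma+\rho_e$, the formula collapses to $\chi^m(H_e(t)x_\sigma)=(1+t)^{a}$ for $m\in\sigma^\perp\cap M$ and $0$ otherwise, giving $H_e(t)x_\sigma\in R_e\cdot x_\sigma\subseteq\OOO_\sigma$ for $t\ne -1$ and $H_e(-1)x_\sigma=x_{\sigma^*}\in\OOO_{\sigma^*}$.

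In both situations the two $\TT$-orbits met by $H_ex_\sigma$ differ in dimension by exactly one (reflecting the removal or addition of the single ray $\rho_e$), the smaller orbit is met at one point, and the larger orbit is met along the $R_e$-coset through any of its points. Unravelling the $\TT$-translation of the first paragraph then extends the conclusion to arbitrary $x\notin X^{H_e}$. The main obstacle I anticipate is the combinatorial step of choosing the enlarged cone $\sigma^*$ and verifying $\sigma^*\in\Sigma$: Demazure's condition~(2) (together with the existence of the supporting form $-e$) is precisely what makes this work.
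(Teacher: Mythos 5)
Your overall strategy --- reduce to the distinguished point $x_\sigma$, locate its $H_e$-orbit in an invariant affine chart, and compute $\exp(t\partial_e)$ on the semigroup algebra --- is sound, and it amounts to a self-contained version of what the paper outsources to the affine result \cite[Proposition~2.1]{AKZ}. The first alternative ($\rho_e\in\sigma(1)$, where $U_\sigma$ itself is $H_e$-invariant) and the final identification of the two orbits are correct. The gap is in the second alternative, $\rho_e\notin\sigma(1)$. There the displayed formula for $\chi^m(H_e(t)x_\sigma)$ is not yet available: it is the comorphism of the $H_e$-action on an affine chart whose coordinate ring is preserved by $\partial_e$, and the only such charts are the $U_{\sigma'}$ with $\rho_e\in\sigma'(1)$. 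Before you know that $x_\sigma$ lies in one of them (equivalently, that $\sigma$ is a face of some cone of $\Sigma$ containing $\rho_e$), the identity $\chi^m(H_e(t)x_\sigma)=\sum_{k}\binom{a}{k}t^k\chi^{m+ke}(x_\sigma)$ is only an equality of rational functions on $\GG_a\times X$ and cannot be evaluated at $(t,x_\sigma)$: a priori $H_e(t)x_\sigma$ could land where $\chi^m$ is undefined. Your argument is therefore circular at exactly this point: the formula is used to derive $e\in\sigma^\perp$, which is what produces (via Demazure's condition~(2)) the cone $\sigma^*=\sigma+\rho_e\in\Sigma$ and hence the invariant chart $U_{\sigma^*}$ that would legitimize the formula.

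The missing ingredient is precisely what the paper imports from the proof of \cite[Proposition~3.14]{Oda} (cf.\ \cite[Lemma~2.4]{Ba}): the charts $U_{\sigma'}$ with $\rho_e\in\sigma'(1)$ are $H_e$-invariant and every point outside their union is $H_e$-fixed. Granting this, $x_\sigma\notin X^{H_e}$ forces $x_\sigma\in U_{\sigma'}$ for some $\sigma'\in\Sigma$ containing $\rho_e$, and your computation then goes through verbatim in $\KK[(\sigma')^\vee\cap M]$. Two smaller points. First, even once the formula is available, the existence of $m\in\sigma^\vee\cap M$ and $k\geqslant 1$ with $m+ke\in\sigma^\perp$ directly yields only $e|_{\sigma}\leqslant 0$; to conclude $e\in\sigma^\perp$ you must also invoke $e|_{\sigma}\geqslant 0$, which holds because every ray of $\sigma$ is distinct from $\rho_e$ and hence pairs non-negatively with $e$ by the definition of a Demazure root. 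Second, if you want to avoid the citation altogether, the fixedness can be proved directly: for $v\in N$ in the relative interior of $\sigma$ one has $\langle v,e\rangle>0$ when $e|_{\sigma}\geqslant 0$ and $e\notin\sigma^\perp$, and the commutation relation between $H_e$ and $\lambda_v$ together with the fact that $(s,y)\mapsto\lambda_v(s)y$ extends to a morphism $\AA^1\times U_\sigma\to U_\sigma$ gives $H_e(t)x_\sigma=\lim_{s\to 0}\lambda_v(s)H_e(s^{\langle v,e\rangle}t)x_0=x_\sigma$. Either way, this step must be supplied; without it the case $\rho_e\notin\sigma(1)$ is not proved.
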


\begin{proof}
It follows from the proof of \cite[Proposition~3.14]{Oda} that the affine charts $U_{\sigma}$, where $\sigma\in\Sigma$ is a cone containing $\rho_e$, are $H_e$-invariant, and the complement of their union is contained in $X^{H_e}$, cf. \cite[Lemma~2.4]{Ba}. This reduces the proof to the case $X$ is affine. Then the assertion is proved in \cite[Proposition~2.1]{AKZ}.
\end{proof}

A pair of $\TT$-orbits $(\OOO_1,\OOO_2)$ on $X$ is said to be {\itshape $H_e$-connected} if
$H_ex\subseteq \OOO_1\cup\OOO_2$ for some $x\in X\setminus X^{H_e}$. By Proposition~\ref{phe}, $\OOO_2\subseteq\overline{\OOO_1}$ for such a pair (up to permutation) and $\dim\OOO_2=\dim\OOO_1+1$. Since the torus normalizes the subgroup $H_e$, any point of
$\OOO_1\cup\OOO_2$ can actually serve as a point $x$.

\begin{lemma} \label{lpe}
A pair of $\TT$-orbits $(\OOO_{\sigma_1},\OOO_{\sigma_2})$ is $H_e$-connected if and only if
$e|_{\sigma_2}\le 0$ and $\sigma_1$ is a facet of $\sigma_2$ given by the equation
$\langle v,e\rangle=0$.
\end{lemma}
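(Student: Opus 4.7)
The plan is to reduce to the affine case, and then use the explicit formula for the LND $\partial_e$ to determine precisely which orbits are $H_e$-connected.

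First I would use the reduction implicit in the proof of Proposition~\ref{phe}: the charts $U_\sigma$ with $\rho_e \subseteq \sigma$ are $H_e$-invariant, and their complement lies in $X^{H_e}$. So any witnessing point $x$ for an $H_e$-connection lives in some such $U_\sigma$, and in particular both $\sigma_1$ and $\sigma_2$ are faces of $\sigma$. Thus the whole problem reduces to understanding the $H_e$-orbits inside an affine toric chart $U_\sigma$ with $\rho_e \leq \sigma$, where $\KK[U_\sigma] = \KK[\sigma^\vee \cap M]$ and the LND is $\partial_e(\chi^m) = \langle n_{\rho_e}, m\rangle \chi^{m+e}$.

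Next I would characterize which points are $H_e$-fixed, orbit by orbit. For $x \in \OOO_\tau$ with $\tau \leq \sigma$, one has $\chi^m(x) \neq 0$ exactly when $m \in \tau^\perp \cap \sigma^\vee$. From $\partial_e(\chi^m)(x) = \langle n_{\rho_e}, m\rangle \chi^{m+e}(x)$ it follows after a short argument (choosing $m \in \tau^\perp$ and translating by $-e$, and checking the membership $m \in \sigma^\vee$) that $x$ is not $H_e$-fixed if and only if $e|_\tau \leq 0$, i.e.\ $\langle n_{\rho'}, e\rangle = 0$ for every ray $\rho' \in \tau(1)$ with $\rho' \neq \rho_e$. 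I expect step this to be the main technical obstacle, because it requires justifying the existence (or nonexistence) of a suitable $m \in \sigma^\vee \cap M$ satisfying the two conditions simultaneously, using only that $e$ is a Demazure root.

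Once this characterization is in hand, identifying the partner orbit is straightforward. Take $x \in \OOO_{\sigma_2}$ with $e|_{\sigma_2} \leq 0$. If $\rho_e \in \sigma_2(1)$, the $\GG_a$-orbit of $x$ under $\partial_e$ leaves $\OOO_{\sigma_2}$ exactly when $s \neq 0$ and lands in the unique orbit corresponding to the facet of $\sigma_2$ cut out by $\langle v, e\rangle = 0$ (the rays with $e = 0$). If $\rho_e \notin \sigma_2(1)$, then $e|_{\sigma_2} = 0$, and condition~(2) in the definition of a Demazure root of the fan guarantees that $\sigma_2 + \rho_e \in \Sigma$, yielding the partner orbit $\OOO_{\sigma_2 + \rho_e}$. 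In either case, denoting the two cones as $\sigma_1 \subsetneq \sigma_2$, we find $\rho_e \subseteq \sigma_2$, $e|_{\sigma_2} \leq 0$, and $\sigma_1 = \sigma_2 \cap \{\langle v, e\rangle = 0\}$, which is Proposition~\ref{phe}'s required dimension relation.

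For the converse direction, given $\sigma_1$ a facet of $\sigma_2$ cut out by $e = 0$ with $e|_{\sigma_2} \leq 0$, the assumption forces $\rho_e$ to be the unique ray of $\sigma_2$ not in $\sigma_1$. Taking $x \in \OOO_{\sigma_2}$, the fixed-point characterization from the previous step shows $x \notin X^{H_e}$, and the orbit analysis identifies the unique second orbit met by $H_e x$ as $\OOO_{\sigma_1}$, completing the proof.
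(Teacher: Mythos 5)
Your reduction to the affine charts $U_{\sigma}$ with $\rho_e\subseteq\sigma$ is exactly the reduction the paper performs; but after that step the paper simply quotes the affine statement from \cite[Lemma~2.2]{AKZ}, whereas you undertake to prove it directly from the formula $\partial_e(\chi^m)=\langle n_{\rho_e},m\rangle\chi^{m+e}$. That is a legitimate route (it amounts to re-deriving the cited lemma), and your fixed-point criterion --- $x\in\OOO_{\tau}$ is moved by $H_e$ if and only if $e|_{\tau}\le 0$ --- is the correct intermediate statement. However, as written the argument has two genuine holes, and they sit precisely where the content of the cited lemma lives.

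First, the existence half of the fixed-point criterion, which you flag as ``the main technical obstacle'' but do not resolve: given $e|_{\tau}\le 0$ you must produce $m\in\sigma^{\vee}\cap M$ with $\langle n_{\rho_e},m\rangle\ge 1$ and $m+e\in\tau^{\perp}$. This is fillable: pick $u\in\sigma^{\vee}\cap M$ with $\tau=\sigma\cap u^{\perp}$ and $\langle n_{\rho'},u\rangle\ge 1$ for every ray $\rho'$ of $\sigma$ not in $\tau$, and take $m=ku-e$ with $k\gg 0$; the hypothesis $e|_{\tau}\le 0$ is exactly what makes $m$ land in $\sigma^{\vee}$ and gives $\langle n_{\rho_e},m\rangle=k\langle n_{\rho_e},u\rangle+1\ge 1$. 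Second, and more seriously, you assert without justification that the partner orbit of $\OOO_{\sigma_2}$ is the one attached to the facet $\sigma_2\cap e^{\perp}$. Proposition~\ref{phe} only tells you that the two cones are nested with a jump of one in dimension; it does not tell you \emph{which} facet, and the condition $e|_{\sigma_1}\le 0$ does not single it out (for $\AA^3$ with $e=(-1,0,0)$ and $\sigma_2=\QQ^3_{\ge 0}$, both $\cone(e_2,e_3)$ and $\cone(e_1,e_3)$ are facets on which $e\le 0$, but only the first is the partner). To close this you need either the explicit expansion $\chi^m(\exp(s\partial_e)\cdot x)=\sum_{k}\binom{\langle n_{\rho_e},m\rangle}{k}s^k\chi^{m+ke}(x)$ together with a check of which monomials survive at a general point of $H_ex$, or an argument via the $R_e$-part of Proposition~\ref{phe}: since $\OOO_{\sigma_1}\cap H_ex$ is a one-dimensional $R_e$-orbit, $n_{\rho_e}\notin\spn(\sigma_1)$, so $\sigma_1$ is a facet of $\sigma_2$ not containing $\rho_e$, hence all its rays lie in $e^{\perp}$ and $\sigma_1=\sigma_2\cap e^{\perp}$. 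With these two points supplied your proof is complete and is in substance the proof of the affine lemma that the paper outsources to \cite{AKZ}.
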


\begin{proof} The proof again reduces to the affine case, where the assertion is \cite[Lemma~2.2]{AKZ}.
\end{proof}


\section{The orbit structure} \label{sec6}

We keep notations of the previous section.
Let us begin with a construction mentioned in the Introduction. Let $X$ be a toric variety with the acting torus $\TT$. Consider a non-trivial action $\GG_a\times X\to X$ normalized by $\TT$ and thus represented by a Demazure root $e$ of the fan $\Sigma$ of $X$. Then $\TT$ acts on $\GG_a$ by conjugation with the character $e$ and the semidirect product $\TT\rightthreetimes\GG_a$ acts on $X$ as well. Let $T=\Ker(e)\subseteq\TT$ and consider the group $\GG:=T\times\GG_a$.

\begin{proposition} \label{constr}
The variety $X$ is an embedding of $\GG$.
\end{proposition}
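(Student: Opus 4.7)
Since $T=\Ker(e)$, the character $e$ vanishes on $T$, so $T$ commutes with $\GG_a$ inside the semidirect product $\TT\rightthreetimes\GG_a$ and $\GG=T\times\GG_a$ is genuinely a direct product; its action on $X$ is obtained by restricting that of $\TT\rightthreetimes\GG_a$. Since $\dim\GG=(n-1)+1=n=\dim X$, it will suffice to exhibit a single point $x\in X$ with trivial $\GG$-stabilizer: then $\GG\cdot x$ will automatically be $n$-dimensional, hence open and dense in the irreducible variety $X$, and $(X,x)$ will be an equivariant $\GG$-embedding.

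My plan is to take $x$ in the open $\TT$-orbit $\OOO_0$ of $X$, the orbit corresponding to the zero cone $\{0\}\in\Sigma$. The first observation is that $\OOO_0\cap X^{H_e}=\emptyset$: the fixed locus $X^{H_e}$ is $\TT$-invariant since $\TT$ normalizes $H_e$, hence a union of $\TT$-orbits; and it is a proper closed subvariety of $X$ because $H_e$ acts non-trivially, so it cannot contain the open orbit $\OOO_0$ and must be disjoint from it. Because $\GG_a$ is connected, the $\GG_a$-stabilizer of $x$ is therefore trivial, and the set-theoretic orbit $\GG_a\cdot x$ coincides with $H_e\cdot x$.

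Now suppose $(t,s)\in T\times\GG_a$ fixes $x$, equivalently $s\cdot x=t^{-1}\cdot x$. The pair $(\OOO_0,\OOO_{\rho_e})$ is $H_e$-connected by Lemma~\ref{lpe}: the facet $\{0\}$ of $\rho_e$ is cut out by the equation $\langle v,e\rangle=0$, and $e|_{\rho_e}$ is non-positive because $\langle n_e,e\rangle=-1$. Proposition~\ref{phe} then gives $H_e\cdot x\cap\OOO_0=R_e\cdot x$. Since $t^{-1}\cdot x\in T\cdot x\subseteq\OOO_0$, we deduce $s\cdot x=r\cdot x$ for some $r\in R_e$, so $tr\in\TT$ fixes $x$. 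As $x$ has trivial $\TT$-stabilizer, $tr=1$ and hence $r=t^{-1}\in T\cap R_e$. The identity $\langle n_e,e\rangle=-1$ says that the restriction of $e$ to $R_e\cong\GG_m$ is the non-trivial character $\tau\mapsto\tau^{-1}$, so $R_e\cap T=R_e\cap\Ker(e)=\{1\}$. Therefore $t=r=1$, which forces $s\cdot x=x$ and then $s=0$ by the previous paragraph. Thus the $\GG$-stabilizer of $x$ is trivial, which is all we needed.

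The only non-trivial input is the identification $H_e\cdot x\cap\OOO_0=R_e\cdot x$, supplied by Proposition~\ref{phe} together with the $H_e$-connectedness of $(\OOO_0,\OOO_{\rho_e})$ from Lemma~\ref{lpe}; I expect this to be the main technical point, since without it the $T$- and $\GG_a$-parts of a putative stabilizer cannot be disentangled. Everything else is a short character computation exploiting $\langle n_e,e\rangle=-1$ and the triviality of the $\TT$-stabilizer on the open orbit.
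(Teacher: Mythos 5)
Your proof is correct, but it takes a genuinely different route from the paper's, which is much shorter: the paper picks a point $x$ whose stabilizers in $\TT$ and in $\GG_a$ are both trivial and then invokes the Jordan decomposition \cite[Theorem~15.3]{Hum} --- in the commutative group $T\times\GG_a$ every closed subgroup is the product of its semisimple and unipotent parts, hence of a subgroup of $T$ and a subgroup of $\GG_a$; applied to the stabilizer of $x$, this gives triviality at once. You instead disentangle the two factors geometrically: you use Proposition~\ref{phe} (together with Lemma~\ref{lpe}) to identify $H_e x\cap\OOO_0$ with $R_e x$ for $x$ in the open $\TT$-orbit, and then the relation $\langle n_e,e\rangle=-1$ to conclude $T\cap R_e=\{1\}$ and kill the putative stabilizer element. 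All the steps check out: the existence of $x\in\OOO_0\setminus X^{H_e}$, the reduction to a trivial stabilizer via the dimension count $\dim\GG=n=\dim X$, and the application of the orbit results (which occur earlier in the paper, so there is no circularity). What the paper's argument buys is brevity and independence from the orbit analysis of Section~\ref{sec5}; what yours buys is an explicit geometric picture, effectively re-deriving the decomposition $\TT=T\times R_e$ that the paper only records after the proposition. Either proof is acceptable.
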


\begin{proof}
Take a point $x\in X$ whose stabilizers in $\TT$ and $\GG_a$ are trivial. It suffices to show that the stabilizer of $x$ in $\GG$ is trivial. To this end, note that by the Jordan decomposition~\cite[Theorem~15.3]{Hum} any subgroup of $T\times\GG_a$ is a product of subgroups in $T$ and $\GG_a$ respectively.
\end{proof}

\begin{remark}
The $\GG$-embedding of Proposition~\ref{constr} is defined by the pair $(\Sigma, e)$.
\end{remark}

Since $\langle n_e,e\rangle=-1$, we have $\TT=T\times R_e$.

\begin{lemma} \label{inv}
Any $(T\times\GG_a)$-invariant subset in $X$ is also $\TT$-invariant.
\end{lemma}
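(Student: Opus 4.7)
My plan is to reduce the lemma to $R_e$-invariance of $Y$: since $\TT=T\times R_e$ and $Y$ is already $T$-invariant by hypothesis, this reduction suffices. So for each $y\in Y$ I must show $R_ey\subseteq Y$, and I split cases according to whether $y$ is $H_e$-fixed, so that Proposition~\ref{phe} applies.

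If $y\notin X^{H_e}$, let $(\OOO_{\sigma_1},\OOO_{\sigma_2})$ be the unique $H_e$-connected pair containing $y$, with $\sigma_1$ a facet of $\sigma_2$. When $y\in\OOO_{\sigma_1}$, Proposition~\ref{phe} gives $R_ey=H_ey\cap\OOO_{\sigma_1}\subseteq H_ey\subseteq Y$, so the $\GG_a$-invariance already does the job. When $y\in\OOO_{\sigma_2}$, I first argue $\sigma_2=\sigma_1+\rho_e$: by Lemma~\ref{lpe} any ray of $\sigma_2$ outside $\sigma_1$ must satisfy $\langle n_\rho,e\rangle<0$, and Demazure root condition~(1) forces such a ray to be $\rho_e$. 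Hence $n_e\in\sigma_2$, so $R_e$ lies in the pointwise stabilizer of $\OOO_{\sigma_2}$ (whose Lie algebra is $\langle\sigma_2\rangle$), and therefore $R_ey=\{y\}\subseteq Y$.

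The real work is the remaining case $y\in X^{H_e}$: then $H_ey=\{y\}$ gives no information, and writing $y\in\OOO_\sigma$ I must show $R_ey\subseteq Ty$. Since $\lie(T)=e^\perp$ and the Lie algebra of the stabilizer of $\OOO_\sigma$ is $\langle\sigma\rangle$, this containment is equivalent to $n_e\in e^\perp+\langle\sigma\rangle$, and because $\langle n_e,e\rangle=-1$ it is in turn equivalent to $e$ not vanishing identically on $\sigma$. I establish the latter by contradiction: if $e|_\sigma\equiv 0$, Demazure root condition~(2) produces the cone $\sigma+\rho_e\in\Sigma$, and Lemma~\ref{lpe} then makes $(\OOO_\sigma,\OOO_{\sigma+\rho_e})$ an $H_e$-connected pair, contradicting $\OOO_\sigma\subseteq X^{H_e}$ (points of $H_e$-connected orbits are never $H_e$-fixed, by the remark after Proposition~\ref{phe}). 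I expect this third case to be the main obstacle: it is precisely where Demazure root condition~(2) — the genuinely fan-theoretic axiom that need not follow from~(1) for $X$ neither affine nor complete — is invoked, and it is the only place where a purely Lie-algebraic translation is needed; the other two cases are almost immediate from Proposition~\ref{phe}.
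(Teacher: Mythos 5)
Your proof is correct and follows essentially the same route as the paper's: both decompose $\TT=T\times R_e$, reduce to $R_e$-invariance, treat points outside $X^{H_e}$ via Proposition~\ref{phe}, and handle $H_e$-fixed points by showing $e|_{\sigma}\ne 0$ so that $\TT x=Tx$. You merely spell out two steps the paper leaves implicit, namely the use of condition~(2) in the definition of a Demazure root of a fan together with Lemma~\ref{lpe} to rule out $e|_\sigma\equiv 0$ on the cone of a fixed point, and the separate treatment of the smaller orbit in an $H_e$-connected pair.
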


\begin{proof}
Note that an orbit $\TT x$ does not coincide with the orbit $Tx$ if and only if the stabilizer of $x$ in  $\TT$ is contained in $T$. For $x\in\OOO_{\sigma}$ this condition is equivalent to $e|_{\sigma}=0$. It shows that for every $x\in X^{\GG_a}$ we have $\TT x=Tx$. If
$x\in X\setminus X^{\GG_a}$, then by Proposition~\ref{phe} the orbit $\GG_ax$ is invariant under
$R_e$. This proves that any orbit of $(T\times\GG_a)$ is $R_e$- and $\TT$-invariant, thus the assertion.
\end{proof}

\begin{proposition} \label{pclo}
Let $X$ be a $\GG$-embedding given by a pair $(\Sigma,e)$. Then any $\GG$-orbit on $X$ is either a union $\OOO_1\cup\OOO_2$ of two $\TT$-orbits on $X$ or a unique $\TT$-orbit; the first possibility occurs if and only if the pair $(\OOO_1,\OOO_2)$ is $H_e$-connected. In particular, the  number of $\GG$-orbits on $X$ is finite.
\end{proposition}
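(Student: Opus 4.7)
The plan is to analyze an arbitrary $\GG$-orbit $\GG x = T\cdot H_e x$ by splitting into two cases according to whether $H_e$ fixes $x$. The workhorse throughout is Lemma~\ref{inv}: since $\GG x$ is $(T\times\GG_a)$-invariant, it is automatically $\TT$-invariant, and hence a union of $\TT$-orbits on $X$.

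First, I would treat the case $x\in X^{H_e}$, where $\GG_a x=\{x\}$ forces $\GG x=Tx$. Since $Tx$ is $(T\times\GG_a)$-invariant, Lemma~\ref{inv} promotes it to a $\TT$-invariant subset; as it already contains $x$, it must contain the entire $\TT$-orbit $\TT x$, so $\GG x=Tx=\TT x$ is a single $\TT$-orbit.

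In the complementary case $x\notin X^{H_e}$, I would apply Proposition~\ref{phe} to obtain that $H_e x$ meets exactly two $\TT$-orbits $\OOO_1$ and $\OOO_2$; the pair $(\OOO_1,\OOO_2)$ is $H_e$-connected by definition. Since $T\subseteq\TT$ preserves each $\TT$-orbit, we get $\GG x = T\cdot H_e x\subseteq \OOO_1\cup\OOO_2$. Conversely, Lemma~\ref{inv} makes $\GG x$ a $\TT$-invariant set which meets both $\OOO_1$ and $\OOO_2$ (because $H_e x$ does), so it contains both; combining the inclusions yields $\GG x=\OOO_1\cup\OOO_2$.

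This dichotomy gives both directions of the characterization: a $\GG$-orbit equal to a union of two $\TT$-orbits can arise only from the second case, forcing the pair to be $H_e$-connected; conversely, any $H_e$-connected pair is realized by picking a point $x\in X\setminus X^{H_e}$ whose $H_e$-orbit meets both orbits. The finiteness assertion is then immediate from the finiteness of the $\TT$-orbit set on the toric variety $X$. I do not anticipate any serious obstacle; the only subtle point is recognizing that Lemma~\ref{inv} applied directly to $Tx$ identifies it with $\TT x$ in the fixed-point case, bypassing any case analysis on the cone $\sigma$ through which $x$ lies.
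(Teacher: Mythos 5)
Your argument is correct and is exactly the route the paper takes: its proof consists of the single line that the assertion ``follows directly from Lemma~\ref{inv} and Proposition~\ref{phe}'', and your case split on $x\in X^{H_e}$ versus $x\notin X^{H_e}$ is precisely the expansion of that line. No discrepancies to report.
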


\begin{proof}
The assertion follows directly from Lemma~\ref{inv} and Proposition~\ref{phe}.
\end{proof}

\begin{proposition}
Let $X$ be a $\GG$-embedding given by a pair $(\Sigma,e)$. Then the stabilizer of any point
$x\in X$ in $\GG$ is connected and the closure of any $\GG$-orbit on $X$ is a (normal) toric variety. If $X$ is smooth, then the closure of any $\GG$-orbit is smooth.
\end{proposition}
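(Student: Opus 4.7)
My approach leverages Proposition~\ref{pclo} to reduce each assertion to a statement about $\TT$-orbits on the toric variety and the structure of closed subgroups of $\GG$.

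For the stabilizer claim, I would first observe that every closed subgroup $H$ of $\GG=T\times\GG_a$ factors as $H=T'\times U$, where $T'\subseteq T$ is closed and $U\in\{\{1\},\GG_a\}$; this is because there are no nontrivial algebraic group homomorphisms from a torus to $\GG_a$, so $H$ cannot be the graph of any nontrivial map between the two factors. Applied to $H=\GG_x$ this yields $\GG_x=T_x\times(\GG_a)_x$, reducing connectedness of $\GG_x$ to connectedness of $T_x=T\cap\TT_x$. For $x\in\OOO_\sigma$ the subtorus $\TT_x=\TT_\sigma$ has cocharacter lattice $N_\sigma=\spn_\QQ(\sigma)\cap N$, so $T_x$ is the kernel of the restricted character $e|_{\TT_\sigma}$. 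If $e|_\sigma=0$ this kernel equals $\TT_\sigma$ and is connected; otherwise Lemma~\ref{lpe} places $\sigma$ inside an $H_e$-connected pair, and the Demazure relation $\langle n_{\rho_e},e\rangle=-1$ combined with the position of $\sigma$ relative to $\rho_e$ forces $e|_{N_\sigma}$ to be primitive in $M/(\sigma^\perp\cap M)$, whence $T_x$ is connected.

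For the orbit-closure assertion, the closure of every $\GG$-orbit equals $\overline{\OOO_\sigma}$ for some $\sigma\in\Sigma$: this is tautological in the single $\TT$-orbit case, and in the union case Lemma~\ref{lpe} gives $\OOO_{\sigma_2}\subseteq\overline{\OOO_{\sigma_1}}$, so the closure equals $\overline{\OOO_{\sigma_1}}$. Standard toric geometry then identifies $\overline{\OOO_\sigma}$ as a normal toric variety with acting torus $\TT/\TT_\sigma$ and defining fan $\mathrm{Star}(\sigma)$ in $N(\sigma)=N/N_\sigma$. If $X$ is smooth, then every cone of $\Sigma$ is regular (its primitive generators extend to a $\ZZ$-basis of $N$), so $\mathrm{Star}(\sigma)$ again consists of regular cones and $\overline{\OOO_\sigma}$ is smooth.

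I expect the main obstacle to be the primitivity of $e|_{N_\sigma}$ used in the stabilizer argument: depending on whether $\OOO_\sigma$ appears as the larger or the smaller $\TT$-orbit of an $H_e$-connected pair, one has to invoke the defining Demazure inequality at $\rho_e$ and the combinatorics of the cones of $\Sigma$ meeting $\rho_e$ in somewhat different ways, and extra care is needed to exclude the appearance of a nontrivial multiplicity in the restricted character. The orbit-closure and smoothness assertions, by contrast, reduce directly to classical toric facts once the $\TT$-orbit description of $\GG$-orbit closures is in hand.
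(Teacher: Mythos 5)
Your treatment of the orbit closures is correct and essentially the paper's own argument: Proposition~\ref{pclo} shows every $\GG$-orbit contains a dense $\TT$-orbit, so every $\GG$-orbit closure is a $\TT$-orbit closure, and normality (respectively smoothness when $X$ is smooth) is the standard toric fact about $\overline{\OOO_\sigma}$ with fan $\mathrm{Star}(\sigma)$ in $N/N_\sigma$. The decomposition $\GG_x=T_x\times(\GG_a)_x$ is also right, although the justification should be the Jordan decomposition (or a Goursat-type argument: no nontrivial quotient of a diagonalizable group is isomorphic to a quotient of a unipotent one), rather than the claim that a subgroup of a product must be a graph.

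The gap sits exactly where you predicted, and it cannot be closed: the connectedness assertion is false. First, your dichotomy is incomplete: if $e$ takes a strictly positive value somewhere on $\sigma$ and $e|_\sigma\ne0$, then by Lemma~\ref{lpe} the orbit $\OOO_\sigma$ belongs to no $H_e$-connected pair (by Proposition~\ref{phe} it then consists of $H_e$-fixed points), and nothing constrains $e|_{N_\sigma}$ to be primitive. Concretely, take $X=\AA^2$ with root $e=(-1,k)$, $k\ge2$, distinguished ray $\cone((1,0))$, and $\sigma=\cone((0,1))$: then $\langle(0,1),e\rangle=k$, so $e|_{N_\sigma}$ is divisible by $k$, and the stabilizer of the point $(1,0)\in\OOO_\sigma$ is $\mu_k\times\GG_a$, which is disconnected. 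This is recorded in the paper's own Example~\ref{ex2}, where the stabilizer of a nonzero point on the line of $\GG_a$-fixed points is stated to be cyclic of order $k$ --- in direct contradiction with the proposition. The paper's proof commits the same error you were worried about: it asserts that the kernel of the \emph{primitive} character $e$ restricted to the \emph{connected} stabilizer $\TT_x$ is connected, but primitivity of $e$ in $M$ does not imply primitivity of its image in the character lattice of the subtorus $\TT_x$. Connectedness does hold on the orbits you analyze: if $e|_\sigma=0$ then $T_x=\TT_\sigma$, and if $\OOO_\sigma$ lies in an $H_e$-connected pair then $\sigma\supseteq\rho_e$ and $\langle n_{\rho_e},e\rangle=-1$ makes $e|_{N_\sigma}$ surjective onto $\ZZ$; the failure occurs precisely on the remaining $H_e$-fixed orbits, so the proposition would need to be weakened (or restricted to such points) to be salvageable.
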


\begin{proof}
The stabilizer of a point $x$ in $\GG$ is the direct product of stabilizers in $T$ and
in~$\GG_a$. An algebraic subgroup of $\GG_a$ is either $\{0\}$ or $\GG_a$ itself, while the stabilizer in $T$ is the kernel of the (primitive) character $e$ restricted to the (connected) stabilizer of $x$ in $\TT$. Thus the stabilizer of $x$ in $\GG$ is connected.

Proposition~\ref{pclo} shows that any $\GG$-orbit on $X$ contains an open $\TT$-orbit, and thus the closure of a $\GG$-orbit coincides with the closure of some $\TT$-orbit. Now the last two assertions follow from \cite[Section~3.1]{Fu}.
\end{proof}

\begin{remark}
If $X$ contains $l$ torus invariant prime divisors, then the number of $\GG$-invariant prime divisors on $X$ is $l-1$. On a toric variety, the closure of any torus orbit is an intersection of torus invariant prime divisors. In contrast, not every $\GG$-orbit closure on $X$ is an intersection of $\GG$-invariant prime divisors, see Example~\ref{ex3}.
\end{remark}


\section{The general case} \label{sec7}

We are going to show that every $\GG$-embedding can be realized as in Proposition~\ref{constr}.

\begin{theorem} \label{tmain}
Let $\GG=T\times\GG_a$ and $X$ be a normal equivariant $\GG$-embedding. Then the $T$-action on $X$ can be extended to an action of a bigger torus $\TT$ such that $\TT$ normalizes
$\GG_a$ and $X$ is a toric variety with the acting torus $\TT$. In particular, every $\GG$-embedding comes from a pair $(\Sigma,e)$, where $\Sigma$ is a fan and $e$ is a Demazure root of $\Sigma$.
\end{theorem}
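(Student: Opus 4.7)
\medskip

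\noindent\textbf{Proof proposal.}
My plan is to reduce the assertion to Proposition~\ref{pcomplete} by passing to an equivariant compactification and then descending the resulting toric structure back to $X$. First I invoke Sumihiro's equivariant completion theorem for the connected group $\GG$ acting on the normal variety $X$: there exists a normal complete $\GG$-variety $\overline{X}$ containing $X$ as an open $\GG$-stable dense subset. Since the unique open $\GG$-orbit in $X$ is also open and dense in $\overline{X}$, the enlarged variety $\overline{X}$ is again a normal equivariant compactification of $\GG$ (the base point of $X$ still has trivial stabilizer in $\overline{X}$).

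Now Proposition~\ref{pcomplete} applies to $\overline{X}$: the acting torus $T\subseteq\GG$ extends to a larger torus $\TT$ acting on $\overline{X}$ with an open orbit, $\TT$ normalizes $\GG_a$, and $\overline{X}$ is toric with respect to $\TT$. Hence $\overline{X}$ comes from a pair $(\overline{\Sigma},e)$ where $e$ is a Demazure root of the fan $\overline{\Sigma}$, and the given $T$-action agrees with the restriction of $\TT$ to the subtorus $\Ker(e)\subseteq\TT$.

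The last step is to show that the open subset $X\subseteq\overline{X}$ is $\TT$-stable, so that $\TT$ acts on $X$ itself. For this I apply Lemma~\ref{inv} to the toric variety $\overline{X}$: any $(T\times\GG_a)$-invariant subset of $\overline{X}$ is automatically $\TT$-invariant. Since $X$ is $\GG=T\times\GG_a$-invariant in $\overline{X}$, it is thus $\TT$-invariant, and so $X$ is an open $\TT$-stable subvariety of the toric variety $\overline{X}$. Any open $\TT$-invariant subset of a toric variety is itself toric with the same acting torus (corresponding to a subfan $\Sigma\subseteq\overline{\Sigma}$), and $e$ remains a Demazure root of $\Sigma$. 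This extends the $T$-action on $X$ to $\TT$, shows $\TT$ normalizes $\GG_a$, and exhibits $X$ as the $\GG$-embedding associated with the pair $(\Sigma,e)$ via Proposition~\ref{constr}.

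The main obstacle I anticipate is the very first step, namely securing a normal $\GG$-equivariant completion $\overline{X}$; this is precisely where Sumihiro's theorem on equivariant completions of normal varieties with connected algebraic group actions is essential. Once $\overline{X}$ is in hand the rest is formal: Proposition~\ref{pcomplete} supplies the toric torus $\TT$, and Lemma~\ref{inv} transfers $\TT$-invariance from the completion down to $X$. A minor point to verify is that $X$ remains a genuine $\GG$-embedding inside $\overline{X}$ (triviality of the stabilizer of the base point is preserved under open immersion), but this is immediate from the definitions.
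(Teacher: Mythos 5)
Your proposal is correct and follows essentially the same route as the paper: Sumihiro's equivariant completion, then Proposition~\ref{pcomplete} applied to the compactification, then Lemma~\ref{inv} to descend $\TT$-invariance to the open subset $X$. The only (harmless) addition is your explicit remark that $X$ corresponds to a subfan on which $e$ is still a Demazure root, which the paper leaves implicit.
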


\begin{proof}
We begin with a classical result of Sumihiro. Let $X$ be a normal variety with a regular action $G\times X\to X$ of a linear algebraic group $G$. By~\cite[Theorem~3]{Su}, there exists a normal complete $G$-variety $\bf{X}$ such that $X$ can be embedded equivariantly as an open subset of $\bf{X}$. In other words, $\bf{X}$ is an equivariant compactification of $X$.

Let $X$ be a normal embedding of $\GG$ and $\bf{X}$ be an equivariant compactification of $X$. By Proposition~\ref{pcomplete}, the $T$-action on $\bf{X}$ can be extended to an action of a bigger torus $\TT$ such that $\TT$ normalizes $\GG_a$ and $\bf{X}$ is a toric variety with the acting torus $\TT$. Since the subset $X\subseteq\bf{X}$ is $(T\times\GG_a)$-invariant, it is invariant under $\TT$, see Lemma~\ref{inv}. This provides the desired structure of a toric variety on $X$.
\end{proof}

\begin{proposition}
A complete toric variety $X$ admits a structure of a $\GG$-embedding if and only if
$\aut(X)^0\ne\TT$.
\end{proposition}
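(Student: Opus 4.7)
My plan is to reduce the proposition to the classical structure theorem for $\aut(X)^0$ of a complete toric variety, using the bridge provided by the results already established in the paper. By Proposition~\ref{constr}, each Demazure root $e \in \mathfrak{R}$ of the fan $\Sigma$ of $X$ produces a $\GG$-embedding structure on $X$ with $T = \Ker(e)$ (a subtorus of rank $n-1$, since $e$ is a primitive character of $\TT$), and by Theorem~\ref{tmain} every $\GG$-embedding structure on $X$ arises this way. Hence the proposition reduces to the statement that $\aut(X)^0 \ne \TT$ if and only if $\mathfrak{R}$ is nonempty.

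The forward implication is essentially immediate. If $X$ admits a $\GG$-embedding, then $\GG$ acts effectively on $X$ (the stabilizer of the base point is trivial), so the subgroup $\GG_a \subseteq \GG$ gives rise to a nontrivial connected subgroup of $\aut(X)$. This subgroup lies in $\aut(X)^0$ by connectedness, yet it cannot be contained in $\TT$ since $\TT$ is a torus and $\GG_a$ is unipotent. Hence $\aut(X)^0 \supsetneq \TT$.

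For the reverse implication I would invoke the classical description of $\aut(X)^0$ for a complete toric variety, due originally to Demazure in the smooth case (see~\cite{De}) and extended to the general complete case (see~\cite[Section~3.4]{Oda}): the connected component of the automorphism group is a linear algebraic group generated by $\TT$ together with the root subgroups $H_e$, $e \in \mathfrak{R}$. Consequently, if $\aut(X)^0 \ne \TT$, at least one Demazure root must exist, and Proposition~\ref{constr} then furnishes a $\GG$-embedding structure on $X$.

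The main obstacle, to the extent there is one, lies in the reverse direction: one must appeal to the classical Demazure--Cox structure theorem for the automorphism group rather than reprove it here. All other ingredients---the identification of $\GG$-embedding structures with Demazure-root data via Theorem~\ref{tmain} and Proposition~\ref{constr}, and the incompatibility of a unipotent subgroup with a torus---are already fully developed within the paper.
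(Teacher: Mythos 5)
Your proposal is correct and follows essentially the same route as the paper: both directions reduce to the classical fact that $\aut(X)^0$ is generated by $\TT$ and the root subgroups $H_e$, with Proposition~\ref{constr} supplying the $\GG$-embedding from a root. Your forward implication is even slightly more self-contained than the paper's (you only need that a nontrivial unipotent subgroup of $\aut(X)^0$ cannot lie in the torus $\TT$, rather than appealing to Theorem~\ref{tmain}), but the overall argument is the same.
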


\begin{proof}
The variety $X$ admits a structure of a $\GG$-embedding if and only if $\aut(X)^0$ contains at least one root subgroup. It is well known that the group $\aut(X)^0$ is generated by $\TT$ and root subgroups \cite[Proposition~11]{De}, \cite[Section~3.4]{Oda}, \cite[Corollary~4.7]{Cox}.
\end{proof}

Consider two structures of a $\GG$-embedding on a variety $X$. We say that such structures are {\itshape equivalent}, if there is an automorphism of $X$ sending one structure to the other.
Since the structure of a toric variety on $X$ is unique up to automorphism, we may assume that our two structures share the same acting torus $\TT$ and the same fan $\Sigma$, and are given by two roots $e,e'$ of $\Sigma$. Then the structures are equivalent if and only if $e$ can be sent to $e'$ by an automorphism of the torus $\TT$. This leads to the following result.

\begin{proposition} \label{peq}
Two structures of a $\GG$-embedding given by pairs $(\Sigma, e)$ and $(\Sigma, e')$ are equivalent if and only if there is an automorphism $\phi$ of the lattice $N$ which preserves the fan $\Sigma$ and such that the induced automorphism $\phi^*$ of the dual lattice $M$ sends $e$ to $e'$.
\end{proposition}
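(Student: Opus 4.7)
The plan is to prove both directions, the backward direction being more delicate because the given equivalence need not normalize $\TT$ a priori.

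For the forward direction, a fan-preserving lattice automorphism $\phi\colon N\to N$ with $\phi^*(e)=e'$ induces, via the toric functor, an automorphism $\psi_\phi$ of $X$ that normalizes $\TT$. A standard character computation shows that the inverse toric automorphism $\psi_{\phi^{-1}}$ sends the root subgroup $H_e$ to $H_{e'}$ and the subtorus $T=\ker(e)$ to $T'=\ker(e')$, providing an equivalence of $\GG$-structures.

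For the backward direction, let $\psi\in\aut(X)$ realize the equivalence. Since $T$ and $T'$ are the maximal tori of the commutative groups $\GG=T\cdot H_e$ and $\GG'=T'\cdot H_{e'}$, we have $\psi T\psi^{-1}=T'$ and $\psi H_e\psi^{-1}=H_{e'}$. The key step is to find $u\in\aut(X)^0$ so that $\psi':=u^{-1}\psi$ normalizes $\TT$ while still intertwining the two structures. Note that $\psi\TT\psi^{-1}$ normalizes $\GG'$, because $\TT$ normalizes $\GG$ (as $T\subset\TT$ and $H_e$ is a $\TT$-root subgroup) and $\psi\GG\psi^{-1}=\GG'$. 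Hence $\psi\TT\psi^{-1}$ lies in $C\cap N_{\aut(X)^0}(\GG')$, where $C:=C_{\aut(X)^0}(T')$. By Cox's description of $\aut(X)^0$ as generated by $\TT$ together with the Demazure root subgroups, and because $e'$ is primitive in $M$ so that the annihilator of $T'$ equals $\ZZ e'$, the connected component of this intersection equals $\TT\ltimes H_{e'}$. In this semidirect product any two maximal tori are conjugate by an element $u\in H_{e'}$, yielding $\psi\TT\psi^{-1}=u\TT u^{-1}$. Then $\psi':=u^{-1}\psi$ normalizes $\TT$ and still intertwines the $\GG$-structures, since $u\in H_{e'}\subset\GG'$ lies in the commutative group $\GG'$.

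Now $\psi'$ is a toric-variety automorphism of $X$ normalizing $\TT$, hence corresponds to a lattice automorphism $\phi\in\aut(N)$ preserving $\Sigma$. To identify $\phi^*$, for $t\in\TT$ and $h\in H_e$ expand $\psi'(tht^{-1})\psi'^{-1}$ in two ways: as $e(t)\cdot\psi'h\psi'^{-1}$ (using $tht^{-1}=e(t)h$) and as $e'(\phi(t))\cdot\psi'h\psi'^{-1}$ (using $\psi'h\psi'^{-1}\in H_{e'}$ and $\psi't\psi'^{-1}=\phi(t)$). Equating for all $t\in\TT$ yields $\phi^*(e')=e$, so $\phi^{-1}$ is the desired lattice automorphism: it preserves $\Sigma$ and sends $e$ to $e'$ under its dual action on $M$. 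The main obstacle is the centralizer-normalizer computation, i.e., pinning down that $C\cap N_{\aut(X)^0}(\GG')$ is connectedly equal to $\TT\ltimes H_{e'}$. This requires Cox's structural result for $\aut(X)^0$, together with a careful check that no other root subgroup (in particular $H_{-e'}$, when $-e'$ also happens to be a Demazure root) normalizes $\GG'$, which fails because $H_{e'}$ and $H_{-e'}$ generate a rank-one non-commutative subgroup.
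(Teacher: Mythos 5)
Your forward direction matches the paper's (a fan automorphism preserving $\Sigma$ and matching the roots induces an equivalence), but for the converse the paper takes a different and shorter route: it invokes Berchtold's theorem \cite[Theorem~4.1]{Be} that the toric structure on $X$ is unique up to an automorphism of $X$, reduces to the case where both $\GG$-structures share the acting torus $\TT$ and the fan $\Sigma$, and then reads off the statement from the fact that the $\GG$-action is encoded by the root. Your argument instead tries to correct the equivalence $\psi$ inside $\aut(X)^0$ by a unipotent element $u\in H_{e'}$ so that $u^{-1}\psi$ normalizes $\TT$, using the centralizer--normalizer computation $C_{\aut(X)^0}(T')\cap N_{\aut(X)^0}(\GG')$ and Cox's description of $\aut(X)^0$. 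In the complete case this is a legitimate (and more explicit) argument, and it actually fills in a step the paper leaves implicit, namely why the given equivalence may be assumed to normalize $\TT$ while still intertwining the two $\GG$-structures; the conjugacy of maximal tori in the connected solvable group $\TT\cdot H_{e'}$, and the exclusion of $H_{-e'}$ when $-e'\in\mathfrak{R}$, are both correct.

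However, there is a genuine gap in the scope of your argument: everything in your backward direction rests on $\aut(X)^0$ being a linear algebraic group generated by $\TT$ and the Demazure root subgroups. This structure theorem (\cite[Proposition~11]{De}, \cite[Corollary~4.7]{Cox}) is available only for \emph{complete} toric varieties, while Proposition~\ref{peq} is stated for arbitrary $\GG$-embeddings and is used in that generality --- Example~\ref{ex2} applies it to $X=\AA^2$, where $\aut(X)$ is an infinite-dimensional group of polynomial automorphisms, has no algebraic identity component in the required sense, and is certainly not generated by $\TT$ and root subgroups. In particular the objects $C_{\aut(X)^0}(T')$ and $N_{\aut(X)^0}(\GG')$, and the conjugacy theorem for maximal tori you apply to them, are not available there. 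To cover the general case you would need to replace this step by an argument valid for all toric varieties, e.g.\ Berchtold's uniqueness of the toric structure as in the paper (noting that both $\TT$ and $\psi\TT\psi^{-1}$ centralize $T'$ and normalize $H_{e'}$, and then arguing that the conjugating automorphism can be chosen compatibly with $\GG'$), or restrict your algebraic-group argument to a suitable algebraic subgroup of $\aut(X)$ containing $\TT$, $\psi\TT\psi^{-1}$ and $H_{e'}$ whose existence must itself be justified. The final character computation identifying $\phi^*(e')=e$ is correct and is the same normalization the paper uses.
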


Let us finish with explicit examples of $\GG$-embeddings into a given variety.

\begin{example} \label{ex2}
We find all structures of $\GG$-embeddings on $\AA^2$. The cone of $\mathbb{A}^2$ as a toric variety is $\QQ^2_{\geqslant0}$. The set of Demazure roots of $\QQ^2_{\geqslant0}$ is
$$
\mathfrak{R}=\{(-1,k)\mid k\in\ZZ_{\geqslant0}\}\sqcup\{(k,-1)\mid k\in\ZZ_{\geqslant0}\},
$$
see Example~\ref{ex1}. The $\GG$-action on $\AA^2$ corresponding to the root $(-1,k)$ is given by
\begin{equation}\label{a2}
(t,s)\circ(x_1,x_2)=(t^kx_1+st^kx_2^k,tx_2),
\end{equation}
where $(x_1,x_2)\in\AA^2$, $s\in\GG_a$, and $t\in\KK^{\times}$. If $k\ne 0$, then there is a line of $\GG_a$-fixed points and the stabilizer of a non-zero point on this line is a cyclic group of order $k$. If $k=0$, then there is no $\GG_a$-fixed point. So formula~\eqref{a2} gives
non-equivalent $\GG$-actions for different~$k$. With $k\ne 0$ we have three $\GG$-orbits on $\AA^2$, while for $k=0$ there are two $\GG$-orbits.

Note that $\GG$-actions defined by the roots $(k,-1)$ and $(-1,k)$ are equivalent via the automorphism $x_1\leftrightarrow x_2$ of $\AA^2$.
\end{example}

\begin{example} \label{ex3}
Let $X=\PP^2$. It is a complete toric variety with a fan $\Sigma$ generated by the vectors $(1,0)$, $(0,1)$ and $(-1,-1)$:

\medskip

\begin{picture}(100,80)
\put(90,35){\vector(1,0){20}} \put(90,35){\vector(0,1){20}}
\put(90,35){\vector(-1,-1){15}} \put(130,65){$N_\QQ$}
\put(290,65){$M_{\QQ}$}
 \put(90,35){\line(1,0){40}}
\put(90,35){\line(0,1){40}} \put(90,35){\line(-1,-1){25}}
\put(325,22){$e_4$} \put(330,35){\circle*{4}} \put(365,43){$e_1$}
\put(370,35){\circle*{4}} \put(325,62){$e_5$}
\put(330,55){\circle*{4}} \put(367,5){$e_2$}
\put(370,15){\circle*{4}} \put(355,60){$e_6$}
\put(350,55){\circle*{4}} \put(338,5){$e_3$}
\put(350,15){\circle*{4}} \put(300,35){\vector(1,0){100}}
\put(350,0){\vector(0,1){75}}
\end{picture}

The set of Demazure roots is
$$
\mathfrak{R}=\{e_1=(1,0), e_2=(1,-1), e_3=(0,-1), e_4=(-1,0), e_5=(-1,1), e_6=(0,1)\}.
$$
We see that for any $i$ and $j$ there exists isomorphism of the fan $\Sigma$
sending $e_i$ to $e_j$. So any $\GG$-embedding into $\PP^2$ is equivalent to
$$
(t,s)\circ[z_0:z_1:z_2]=[tz_0+stz_1:tz_1:z_2].
$$
This time seven $\TT$-orbits glue to five $\GG$-orbits.
\end{example}

\begin{example} \label{ex4}
Consider the Hirzebruch surface $\FF_1$. The corresponding complete fan $\Sigma$ is generated by the vectors $(1,0)$, $(0,1)$, $(0,-1)$, and $(-1,1)$:

\medskip

\begin{picture}(100,75)
\put(130,65){$N_\QQ$} \put(290,65){$M_{\QQ}$}
\put(90,35){\vector(1,0){20}} \put(90,35){\vector(0,1){20}}
\put(90,35){\vector(-1,1){15}} \put(90,35){\vector(0,-1){20}}
\put(90,35){\line(1,0){35}} \put(90,35){\line(0,1){35}}
\put(90,35){\line(-1,1){25}} \put(90,35){\line(0,-1){35}}
\put(330,35){\circle*{4}} \put(370,35){\circle*{4}}
\put(370,55){\circle*{4}} \put(350,55){\circle*{4}}
\put(300,35){\vector(1,0){100}} \put(350,0){\vector(0,1){75}}
\put(325,22){$e_2$} \put(365,22){$e_1$} \put(335,60){$e_3$}
\put(373,60){$e_4$}
\end{picture}

The set of Demazure roots is
$$
\mathfrak{R}=\{e_1=(1,0), e_2=(-1,0), e_3=(0,1), e_4=(1,1) \}.
$$
By an automorphism, we can send $e_1$ to $e_2$ and $e_3$ to $e_4$. For the first equivalence class we have six $\GG$-orbits, while in the second one the number of $\GG$-orbits in seven.
\end{example}


\end{document}